\documentclass[11pt]{amsart}
\usepackage[totalwidth=440pt, totalheight=640pt]{geometry}
\usepackage{amsmath, amssymb, amsthm, amsfonts, mathrsfs, eucal, enumerate, natbib, layout}
\usepackage[normalem]{ulem}
\usepackage[all,tips,2cell]{xy}
\usepackage[usenames]{color}
\usepackage{verbatim}
\usepackage{mathtools}


\newcommand{\details}[1]{}

\newtheorem{theorem}{Theorem}[section]
\newtheorem*{theorem*}{Theorem}
\newtheorem{corollary}[theorem]{Corollary}
\newtheorem*{corollary*}{Corollary}
\newtheorem{lemma}[theorem]{Lemma}

\newtheorem*{claim*}{Claim}
\newtheorem*{lemma*}{Lemma}
\newtheorem{proposition}[theorem]{Proposition}
\newtheorem*{proposition*}{Proposition}

\newtheorem*{conjecture*}{Conjecture}
\newtheorem{def-proposition}[theorem]{Definition-Proposition}
\theoremstyle{definition}
\newtheorem{definition}[theorem]{Definition}
\newtheorem*{definition*}{Definition}
\newtheorem{remark}[theorem]{Remark}

\newtheorem*{example*}{Example}
\numberwithin{equation}{section}


\newcommand{\bS}{\mathbf{S}}

\newcommand{\ZZ}{\mathbb{Z}}

\newcommand{\GG}{\mathbb{G}}

\newcommand{\GL}{\mathrm{GL}}
\newcommand{\PGL}{\mathrm{PGL}}
\newcommand{\End}{\mathrm{End}}

\newcommand{\uAut}{\underline{\mathrm{Aut}}}
\newcommand{\uEnd}{\underline{\mathrm{End}}}
\newcommand{\uIsom}{\underline{\mathrm{Isom}}}

\newcommand{\cTors}{\mathcal{T}\mathrm{ors}}
\newcommand{\cTorsRig}{\mathcal{T}\mathrm{orsRig}}
\newcommand{\cCub}{\mathcal{C}\mathrm{ub}}

\newcommand{\bTors}{\mathbb{T}\mathrm{ors}}
\newcommand{\tors}{\mathrm{Tors}}
\newcommand{\bGerbe}{\mathbb{G}\mathrm{erbe}}

\newcommand{\bbPicard}{\mathbb{P}\mathrm{icard}}

\newcommand{\HH}{\mathrm{H}}
\newcommand{\R}{\mathrm{R}}

\newcommand{\Br}{\mathrm{Br}}
\newcommand{\Az}{\mathrm{Az}}
\newcommand{\Pic}{\mathrm{Pic}}

\newcommand{\Lf}{\mathrm{Lf}}

\newcommand{\Ob}{\mathrm{Ob}}
\newcommand{\Ab}{\mathrm{Ab}}
\newcommand{\res}{\mathrm{res}}

\newcommand{\id}{\mathrm{id}}
\newcommand{\st}{\mathrm{st}}

\newcommand{\cH}{\mathcal{H}}
\newcommand{\cB}{\mathcal{B}}

\newcommand{\cG}{\mathcal{G}}
\newcommand{\cA}{\mathcal{A}}
\newcommand{\cAut}{\mathcal{A}\mathrm{ut}}
\newcommand{\cD}{\mathcal{D}}

\newcommand{\cL}{\mathcal{L}}
\newcommand{\cM}{\mathcal{M}}
\newcommand{\cP}{\mathcal{P}}

\newcommand{\cO}{\mathcal{O}}
\newcommand{\cX}{\mathcal{X}}
\newcommand{\cF}{\mathcal{F}}
\newcommand{\cE}{\mathcal{E}}
\newcommand{\cY}{\mathcal{Y}}

\newcommand{\ass}{\mathsf{a}}
\newcommand{\comm}{\mathsf{c}}

\newcommand{\twoP}{\mathbb{P}}

\newcommand{\twoX}{\mathbb{X}}


\begin{document}

\title[Brauer groups of 1-motives]
{Brauer groups of 1-motives}

\author{Cristiana Bertolin}
\address{Dipartimento di Matematica, Universit\`a di Torino, Via Carlo Alberto 10, Italy}
\email{cristiana.bertolin@unito.it}
\author{Federica Galluzzi}
\address{Dipartimento di Matematica, Universit\`a di Torino, Via Carlo Alberto 10, Italy}
\email{federica.galluzzi@unito.it}

\subjclass{14F22, 16H05 }

\keywords{Gerbes on a stack, Azumaya algebras over a stack, Brauer group of a stack, 1-motives}




\begin{abstract} 
Over a normal base scheme, we prove the generalized Theorem of the Cube for 1-motives and that a torsion class of the group $\HH^2_{\acute {e}t}(M,\GG_{m,M})$ of a 1-motive $M$, whose pull-back via the unit section 
$\epsilon : S \rightarrow M$ is zero, comes from an Azumaya algebra. In particular, we deduce that over an algebraically closed field of characteristic zero, all classes of $\HH^2_{\acute {e}t}(M,\GG_{m,M})$ come from Azumaya algebras. 

\end{abstract}


\maketitle


\tableofcontents

\section*{Introduction}

 Grothendieck has defined the Brauer group $\Br(X)$ of a scheme $X$ as the group of similarity classes of Azumaya algebras over $X$. In \cite[I, \S 1]{Grothendieck68} he constructed an injective group homomorphism  
    \begin{equation} \label{eq:DeltaGrothendieck}
\delta: \Br(X) \longrightarrow \HH^2_{\acute {e}t}(X,\GG_m)
\end{equation}

 \noindent from the Brauer group of $X$ to the \'etale cohomology group $\HH^2_{\acute {e}t}(X,\GG_m)$. 
This homomorphism is not in general bijective, as pointed out by Grothendieck in 
\cite[II, \S 2]{Grothendieck68}, where he found a scheme $X$ whose Brauer group is a torsion group but whose \'etale cohomology group $\HH^2_{\acute {e}t}(X,\GG_m)$ is not torsion.
However, if $X$ is quasi-compact the elements of $\delta (\Br(X))$ are torsion elements of $\HH^2_{\acute {e}t}(X,\GG_m),$ and so Grothendieck asked in loc. cit. the following question:

QUESTION: \emph{For a quasi-compact scheme $X$, is the image of $\Br(X)$ via the homomorphism $\delta$ (\ref{eq:DeltaGrothendieck}) the torsion subgroup $\HH^2_{\acute {e}t}(X,\GG_m)_\tors$ of $\HH^2_{\acute {e}t}(X,\GG_m)$?}

Grothendieck showed that if $X$ is regular, the \'etale cohomology group $\HH^2_{\acute {e}t}(X,\GG_m)$ is a torsion group, and so under this hypothesis the question is whether the Brauer group of $X$ is all of $ \HH^2_{\acute {e}t}(X, \GG_m).$

The following well-known results are related to this question:
If $X$ has dimension $\leq 1$ or if $X$ is regular and of dimension $\leq 2,$ then the Brauer group of $X$ is all of  $\HH^2_{\acute {e}t}(X, \GG_{m,X})$ (\cite[II, Cor 2.2]{Grothendieck68}). Gabber (unpublished theorem) showed that the Brauer group of a quasi-compact and separated scheme $X$ endowed with an ample invertible sheaf is isomorphic to $ \HH^2_{\acute {e}t}(X, \GG_m)_{\tors}$. A different proof of this result was found by de Jong (see \cite{deJong}).

In \cite{Giraud} Giraud introduced gerbes in the general setting of non abelian cohomology following Grothendieck's ideas: in particular he proved that gerbes give a geometrical description of classes of the group $ \HH^2(X, \GG_m)$. 

The aim of this paper is to extend Grothendieck's theory of Brauer groups to 1-motives, using gerbes as fundamental tools.

In particular, 
\begin{itemize}
   \item we study gerbes on stacks which are not separated;
   \item we study Azumaya algebras and Brauer groups for stacks which are not separated;
   \item we apply the above results to 1-motives using the dictionary between
length two complexes of abelian sheaves and Picard stacks
 developed by Deligne in \cite[Expos\'e XVIII, \S 1.4]{SGA4D}. Remark that the Picard stacks associated to 1-motives are not algebraic in the sense of \cite{LaumonMoretB} since they are not quasi-separated. 
\end{itemize}

We proceed in the following way:

Let $\bS$ be a site. In Section \ref{recall} we associate to a stack in groupoids $\cX$ over $\bS$ the site $\bS(\cX)$, which allows us to study the notion of sheaf and gerbe on a stack. 

In Section \ref{bands} we prove the following homological interpretation of $F$-gerbes, with $F$ an abelian sheaf 
on a site $\bS$:  
 the Picard 2-stack $\bGerbe_{\bS} (F)$ of $F$-gerbes is equivalent (as Picard 2-stack) to the Picard 2-stack associated to the complex $F[2]$, where $F[2]=[F \to 0 \to 0]$ with $F$ in degree -2:
\begin{equation}\label{eq:homointergerbes}
\bGerbe_{\bS} (F) \cong 2\st (F[2])
\end{equation}
 (Theorem \ref{thm:H^2-general}). In particular, for $i=2,1,0,$ we have an isomorphism of abelian groups between the $i$-th classifying group $ \bGerbe^i_{\bS}(F)$ and the cohomological group $\HH^i(\bS,F).$ The equivalence of Picard 2-stacks (\ref{eq:homointergerbes}) contains the following classical result: elements of $ \bGerbe^2_{\bS}(F)$, which are $F$-equivalence classes of $F$-gerbes, are parametrized by cohomological classes of
 $\HH^2(\bS,F)$. Always in Section \ref{bands},
applying \cite[Chp IV]{Giraud} to the site $\bS(\cX)$ associated to a stack $\cX$, 
 we obtain the Picard 2-stack
 $\bGerbe_{\bS(\cX)} (\cF)$ of $\cF$-gerbes on $\cX$, with $\cF$ an abelian sheaf on the site $\bS(\cX)$. 
We finish Section \ref{bands} proving the effectiveness of the 2-descent of $\GG_m$-gerbes with respect to a faithfully flat morphism of schemes which is quasi-compact or locally of finite presentation (Theorem \ref{thm:DescentGerbesOverSchemes}).

Let $\bS_{\acute {e}t}$ be the \'etale site on an arbitrary scheme $S$ and let $\cX=(\cX, \cO_\cX)$ be a locally ringed $S$-stack with associated \'etale site $\bS_{\acute {e}t} (\cX).$ In Section \ref{brauergroup} we recall the notion of the Brauer group $\Br(\cX)$ of $\cX$ and in Theorem \ref{thm:BR(X)inj} we establish an injective group homomorphism  
\begin{equation}\label{introDelta}
\delta: \Br(\cX) \longrightarrow  \HH^2_{\acute {e}t}(\cX,\GG_{m, {\cX}}).
\end{equation}
which extends Grothendieck's group homomorphism (\ref{eq:DeltaGrothendieck}) to locally ringed $S$-stacks.

Let $M=[u:X \rightarrow G]$ be a 1-motive defined over a scheme $S$,
 with $X$ an $S$-group scheme which is, locally for the \'etale
topology, a constant group scheme defined by a finitely generated free
$\ZZ \,$-module, $G$ an extension of an abelian $S$-scheme by an $S$-torus, and finally 
 $u:X \rightarrow G $ a morphism of $S$-group schemes.
Since in \cite[Expos\'e XVIII, \S 1.4]{SGA4D} Deligne associates to any length two complex of abelian sheaves a Picard stack, in Section \ref{geometricalcase} we can define the Brauer group of the 1-motive $M$ as the Brauer group $ \Br(\cM)$ of the associated Picard stack $\cM$ and by Theorem \ref{thm:BR(X)inj} we have an injective group homomorphism $\delta: \Br(\cM) \rightarrow  \HH^2_{\acute {e}t}(\cM,\GG_{m, {\cM}}).$
At the end of Section \ref{geometricalcase} we prove the effectiveness of the descent of Azumaya algebras and of $\GG_m$-gerbes with respect to the quotient map $\iota : G \to [G/X] \cong \cM $ (Lemma \ref{lem:DescentAzumaya} and Lemma \ref{lem:DescentGerbes}).

Denote by $s_{ij}:=   \cM \times_{\bS} \cM  \to \cM \times_{\bS} \cM \times_{\bS} \cM  $ the map
which inserts the unit section $\epsilon: {\bS} \rightarrow \cM$ of $\cM$ into the $k$-th factor for $k \in \{1,2,3 \}-\{i,j\} .$
If $\ell$ is a prime number distinct from the residue characteristics of $S$,
we say that the 1-motive $M$ satisfies \textbf{the generalized Theorem of the Cube} for the prime $\ell$ if the homomorphism 
\[
\begin{matrix}
\prod_{(i,j) \in \{1,2,3 \}} s_{ij}^{*} : \HH_{\acute {e}t}^2(\cM^3,\GG_{m, \cM^3})(\ell) & \longrightarrow &  \big( \HH_{\acute {e}t}^2(\cM^2 ,\GG_{m, \cM^2})(\ell) \big)^3\\
x &  \longmapsto & (s_{12}^{*} (x), s_{13}^{*} (x),s_{23}^{*} (x))
\end{matrix}
\]
 is injective, where $(\ell)$ denotes the $\ell$-primary component (Definition \ref{def-thmCube}). We start Section \ref{proofGTC} studying the consequences of the generalized Theorem of the Cube for 1-motives. 
In Corollary \ref{cor:HythForThmCubeOnG} we show that if the base scheme is connected, reduced, normal and noetherian, extensions of abelian schemes by split tori satisfy the generalized Theorem of the Cube for any prime $\ell$ distinct from the residue characteristics of $S$ (Corollary \ref{cor:HythForThmCubeOnG}). Then, as a consequence of the effectiveness of the 2-descent of $\GG_m$-gerbes with respect to the quotient map $\iota : G \to [G/X] \cong \cM $ (Lemma \ref{lem:DescentGerbes}), we get Theorem \ref{thm:HythForThmCubeOnM}: 
1-motives, which are defined over a connected, reduced, normal and noetherian scheme $S$, and whose underlying tori are split, satisfy the generalized Theorem of the Cube for any prime $\ell$ distinct from the residue characteristics of $S$. Note that
in \cite[Thm 5.1]{BB} S. Brochard and the first author prove the Theorem of the Cube (involving the $\HH^1(\cM,\GG_{m,\cM})$ instead of $\HH^2(\cM,\GG_{m,\cM})$) for 1-motives, and in \cite{BF} the authors show that the sheaf of divisorial correspondences of extensions of abelian schemes by tori is representable.

In Section \ref{proofmainTHM} we investigate Grothendieck's QUESTION for 1-motives and
our answer is contained in Theorem \ref{mainTHM} which states that if    
$M=[u:X \rightarrow G]$ is 1-motive defined over a normal and noetherian scheme $S$ and if the extension $G$ underlying $M$ satisfies the generalized Theorem of the Cube for a prime number $\ell$ distinct from the residue characteristics of $S$, then the $\ell$-primary component of the kernel of the homomorphism $\HH^2_{\acute {e}t}(\epsilon):\HH^2_{\acute {e}t}(\cM,\GG_{m,\cM})\rightarrow \HH^2_{\acute {e}t}(S,\GG_{m,S})$ induced by the unit section 
$\epsilon : S \rightarrow M$ of $M,$ is contained in the Brauer group of $M$:
$$\ker \big[\HH^2_{\acute {e}t}(\epsilon):\HH^2_{\acute {e}t}(\cM,\GG_{m,\cM})\longrightarrow \HH^2_{\acute {e}t}(S,\GG_{m,S})\big] (\ell) \subseteq \Br(\cM).$$ 
 
\par\noindent We prove this result as follows: first we show this theorem for 
an extension of an abelian scheme by a torus using Hoobler's Theorem \cite[Thm 3.3]{Hoobler72}  
(Proposition \ref{mainThmG}). Then, thanks to the effectiveness of the descent of Azumaya algebras and of $\GG_m$-gerbes with respect to the quotient map $\iota : G \to [G/X] \cong \cM ,$ we get the required statement for $M$. We finish Section \ref{proofmainTHM} giving a positive answer to Grothendieck's QUESTION for 1-motives (and so in particular for semi-abelian varieties) over an algebraically closed field of characteristic zero (Corollary \ref{Br(M)}).

In the last years, several authors have worked with the Brauer group of stacks (see for example \cite{AM16}, \cite{EHKV01}, \cite{Lieblich08}) but most of them focus on algebraic or separated stacks. Moreover the techniques used in this paper are rather different from the ones used in \cite{AM16}, \cite{EHKV01}, \cite{Lieblich08}. Since the Picard stack associated to a 1-motive is not quasi-separated, we recall the theory of Brauer group of stacks.

An important role in this paper is played by the 2-descent theory of gerbes for which we add an Appendix. 

\section*{Acknowledgment}
We are very grateful to Pierre Deligne for his comments on the first version of this paper and for his communication on 2-descent theory for stacks (see Appendix). We would like to thank also the referee for the very useful comments. 

\section*{Notation}

\textbf{Stack language}\\
Here we refer mainly to \cite{Giraud}. Let $\bS$ be a site. A \textbf{stack} over $\bS$ is a fibered category $\cX$ over $\bS$ such that 
\begin{itemize}
   \item (\emph{Gluing condition on objects}) descent is effective for objects in $\cX$, and
   \item (\emph{Gluing condition on arrows}) for any object $U$ of $\bS$ and for every pair of objects $X,Y$ of the category $\cX(U)$, the presheaf of arrows $\mathrm{Arr}_{\cX(U)}(X,Y)$ of $\cX(U)$ is a sheaf over $U$.
\end{itemize}
 For the notions of morphismsm of stacks (i.e. catesian functors) and morphism of cartesian functors we refer to \cite[Chp. II 1.2]{Giraud}. 
 An \textbf{equivalence} (resp. \textbf{isomorphism}) \textbf{of stacks} $F: \cX \to \cY$ is a
  morphism of stacks which is an equivalence  (resp. isomorphism) of fibered categories over $\bS$, that is 
$F(U): \cX(U) \to \cY(U)$ is an equivalence (resp. isomorphism) of categories for any object $U$ of $\bS$.
A \textbf{stack in groupoids} over $\bS$ is a stack $\cX$ over $\bS$ such that for any object $U$ of $\bS$ the category $\cX(U)$ is a groupoid, i.e. a category in which all arrows are invertible. Recall that 2-morphisms of stacks in groupoids are automatically invertible. \emph{From now on, all stacks will be stacks in groupoids.}

A \textbf{gerbe} over the site $\bS$ is a stack $\cG$ over $\bS$ such that
\begin{itemize}
	\item $\cG$ is locally not empty: for any object $U$ of $\bS$, there exists a covering $\{\phi_i: U_i \to U\}_{i \in I} $ for which the set of objects of the category $\cG (U_i)$ is not empty for all $i \in I$;
	 \item $\cG$ is locally connected: for any object $U$ of $\bS$ and for each pair of objects $g_1$ and $g_2$  of $\cG(U)$, there exists a covering $\{\phi_i: U_i \to U\}_{i \in I} $ of $U$ such that the set of arrows from $g_{1\vert U_i}$ to $g_{2\vert U_i}$ in $\cG(U_i)$ is not empty for all $i \in I$.
\end{itemize}
A \textbf{morphism} (resp. \textbf{isomorphism}) \textbf{of gerbes} is just a morphism (resp. isomorphism) of stacks whose source and target are gerbes, and a 2-morphism of gerbes is a morphism of cartesian functors. An \textbf{equivalence of gerbes} is an equivalence of stacks. 

A \textbf{strictly commutative Picard stack} over the site $\bS$ (just called a Picard stack) is a stack $\cP$ over $\bS$ endowed with a morphism of stacks $ \otimes: \cP \times_{\bS} \cP \rightarrow \cP,$ called the group law of $\cP$, and two natural isomorphisms $\ass$ and $\comm$, expressing the associativity and the commutativity constraints of the group law of $\cP$,
such that $\cP (U)$ is a strictly commutative Picard category for any object $U$ of $\bS$ (see \cite{SGA4D} 1.4.2 for more details).
 An \textbf{additive functor} $(F,\sum):\cP_1 \rightarrow \cP_2 $
between two Picard stacks is a morphism of stacks $F: \cP_1 
\rightarrow \cP_2$ endowed with a natural isomorphism $\sum: F(a\otimes_{\cP_1} b) \cong 
F(a) \otimes_{\cP_2} F(b)$ (for all $a,b \in \cP_1$) which is compatible with the natural 
isomorphisms $\ass$ and $\comm$ underlying 
$\cP_1$ and $\cP_2$. 

A \textbf{strict 2-category} (just called 2-category) $\mathbb{A}=(A,C(a,b),K_{a,b,c},U_{a})_{a,b,c \in A}$ is given by the following data: a set $A$ of objects $a,b,c, ...$; for each ordered pair $(a,b)$ of objects of $A$, a category $C(a,b)$;
for each ordered triple $(a,b,c)$ of objects $A$, a composition functor $K_{a,b,c}:C(b,c) \times C(a,b) \to C(a,c),$
that satisfies the associativity law;
for each object $a$, a unit functor $U_a:\mathbf{1} \to C(a,a)$ where $\mathbf{1}$ is the terminal category, that provides a left and right identity for the composition functor.

A \textbf{2-stack} over the site $\bS$ is a fibered 2-category $\twoX$ over $\bS$ (i.e. a family of 2-categories indexed by objects of $\bS$, see \cite[1.10 p.29]{Breen94} for more details) such that
\begin{itemize}
  \item  2-descent is effective for objects in $\twoX$ (see \cite[1.10 p.31]{Breen94}), and
	\item  for any object $U$ of $\bS$ and for every pair of objects $X,Y$ of the 2-category $\twoX(U)$, the fibered category of arrows $\mathrm{Arr}_{\twoX(U)}(X,Y)$ of $\twoX(U)$ is a stack over $\bS_{|U}.$
\end{itemize}
 
For the notions of morphisms of 2-stacks (i.e. cartesian 2-functors), morphisms of cartesian 2-functors, modifications of 2-stacks and equivalences of 2-stacks, we refer to \cite[Chp I]{Hakim}. A \textbf{2-stack in 2-groupoids} over $\bS$ is a 2-stack $\twoX$ over $\bS$ such that for any object $U$ of $\bS$ the 2-category $\twoX(U)$ is a 2-groupoid. \emph{From now on, all 2-stacks will be 2-stacks in 2-groupoids.}

Let  $S$ be an arbitrary scheme and denote by $\bS$ the site of $S$ for a Grothendieck topology that we will fix later. We will call a stack, a Picard stack, a 2-stack over $\bS$ respectively an $S$-stack, a Picard $S$-stack, an $S$-2-stack.


\section{Recall on sheaves, gerbes and Picard stacks on a stack}\label{recall}

Let $\bS$ be a site. Let $\cX$ be a stack over $\bS$. We always assume that fibered (2-)categories come with a fixed cleavage (see \cite[\S2, \S6]{Breen10}). Deligne furnished us the following definition of site associated to a stack.

\begin{definition}\label{defsito}
The \textbf{site $\bS (\cX)$ associated to $\cX$ over $\bS$} is the site defined in the following way:
\begin{itemize}
	\item the category underlying $\bS (\cX)$ consists of the objects $(U,u)$ with $U$ an object of $\bS$ and $u$ an object of $\cX (U)$, and of the arrows $(\phi,\Phi): (U,u) \to (V,v)$ with $\phi: U \to V$ a morphism of $\bS$ and $\Phi: \phi^* v \to u$ an isomorphism in $\cX (U)$. We call the pair $(U,u)$ an \textbf{open of $\cX$} with respect to the chosen topology.
\item the topology on $\bS (\cX)$ is the one generated by the pre-topology for which a covering of $(U,u)$ is a family $\{(\phi_i, \Phi_i): (U_i,u_i) \to (U,u)\}_i $  such that the morphism of $\bS$ $ \coprod \phi_i: \coprod U_i \to U$ is a covering of $U$.
\end{itemize}
\end{definition}


\begin{definition}\label{def:SheafOnStack}
A \textbf{sheaf (of sets) $\cF$ on $\cX$} is a system $(\cF_{U,u},\theta_{\phi,\Phi})$, where for any object $(U,u)$ of $\bS (\cX)$, $\cF_{U,u}$ is a sheaf on $\bS_{|U}$, and for any arrow $(\phi,\Phi): (U,u) \to (V,v)$ of $\bS(\cX)$, $\theta_{\phi,\Phi}: \cF_{V,v} \to \phi_*  \cF_{U,u}$ is a morphism of sheaves on $\bS_{|V}$, such that 

$(i)$ if $(\phi,\Phi): (U,u) \to (V,v)$ and $(\gamma,\Gamma): (V,v) \to (W,w)$ are two arrows of $\bS(\cX)$, then $\gamma_* \theta_{\phi,\Phi} \circ \theta_{\gamma,\Gamma} = \theta_{\gamma \circ \phi, \phi^* \Gamma \circ \Phi}$;

$(ii)$ if $(\phi,\Phi): (U,u) \to (V,v)$ is an arrow of $\bS (\cX)$, the morphism of sheaves $\phi^{-1}\cF_{V,v} \to \cF_{U,u} $, obtained by adjunction from $\theta_{\phi,\Phi}$, is an isomorphism. 
\end{definition}

To simplify notations, we denote just $(\cF_{U,u})$ the sheaf $\cF= (\cF_{U,u},\theta_{\phi,\Phi})$. The set of \textbf{global sections} $\Gamma (\cX, \cF)$ of a sheaf $\cF$ on $\cX$ is the set of families $(s_{U,u})$ of sections of $\cF$ on the objects $(U,u)$ of $\bS(\cX)$ such that for any arrow $(\phi,\Phi): (U,u) \to (V,v)$ of $\bS (\cX),$ $\res_{\phi} s_{V,v}=s_{U,u}.$ 

An \textbf{abelian sheaf $\cF$ on $\cX$} is a system $(\cF_{U,u})$ 
verifying the conditions $(i)$ and $(ii)$ of Definition \ref{def:SheafOnStack}, where the $\cF_{U,u}$ 
are abelian sheaves on $\bS_{|U}$. We denote by $\Ab (\cX)$ the category of  abelian sheaves on $\cX$. According to \cite[Exp II, Prop. 6.7]{SGA4G} and \cite[Thm 1.10.1]{Tohoku}, the category $\Ab (\cX)$ is an abelian category with enough injectives.
Let ${\mathrm{R}}\Gamma(\cX,-)$ be the right derived functor of the functor $\Gamma(\cX,-): \Ab (\cX) \to \Ab$ of global sections (here $\Ab$ is the category of abelian groups). The $i$-th cohomology group $\HH^i\big({\mathrm{R}}\Gamma(\cX,-)\big)$ of ${\mathrm{R}}\Gamma(\cX,-)$ is denoted by $\HH^i(\cX,-)$.

A \textbf{stack on $\cX$} is a stack $\cY$ over $\bS$ endowed with a morphism of stacks $P: \cY \to \cX$ (called the structural morphism) such that for any object $(U,x)$ of $\bS(\cX)$ the fibered product 
$ U \times_{x,\cX,P} \cY $ is a stack over $\bS_{|U}.$

A \textbf{gerbe on $\cX$} is stack  $\cG$ over $\bS$ endowed with a morphism of stacks $P: \cG \to \cX$ (called the structural morphism) such that for any object $(U,x)$ of $\bS(\cX)$ the fibered product $ U \times_{x,\cX,P} \cG $ is a gerbe over $\bS_{|U}.$
A \textbf{morphism of gerbes on $\cX$} is a morphism of gerbes which is compatible with the underlying structural morphisms.

Let $F: \cX \to \cY$ be a morphism of $S$-stacks and let $\cG$ be a gerbe on $\cY$. The \textbf{pull-back of 
$\cG$ via $F$} is the fibered product 
\begin{equation}\label{def:pull-back}
 F^* \cG := \cX \times_{F,\cY,P} \cG 
 \end{equation}
 of $\cX$ and $\cG$ via the morphism $F: \cX \to \cY$ and the structural morphism $P: \cG \to \cY$ underlying $\cG$ (see \cite[Def 2.14]{BT} for the definition of fibered product of $S$-stacks).

 A \textbf{Picard stack on $\cX$} is a stack $\cP$ over $\bS$ endowed with a morphism of stacks $P: \cP \to \cX$ (called the structural morphism), with a morphism of stacks $ \otimes: \cP \times_{P,\cX,P} \cP \rightarrow \cP$, and with two natural isomorphisms $\ass$ and $\comm$,
such that $ U \times_{x,\cX,P} \cP $ is a Picard stack over $\bS_{|U}$ for any object $(U,x)$ of $\bS(\cX)$.

A \textbf{Picard 2-stack on $\cX$} is a 2-stack $\twoP$ over $\bS$ endowed with a morphism of 2-stacks $P: \twoP \to \cX$ (called the structural morphism - here we see $\cX$ as a 2-stack), with a morphism of 2-stacks $ \otimes: \twoP \times_{P,\cX,P} \twoP \rightarrow \twoP$, and with two natural 2-transformations $\ass$ and $\comm$,
such that $ U \times_{x,\cX,P} \twoP $ is a Picard 2-stack over $\bS_{|U}$ for any object $(U,x)$ of $\bS(\cX)$ (for more details see \cite[\S 1]{BT} or \cite{BT0}). 
An \textbf{additive 2-functor} $(F,\lambda_F): \twoP_1 \to \twoP_2$ between two Picard 2-stacks on $\cX$ is given by a morphism of 2-stacks $F: \twoP_1 \to \twoP_2$ and a natural 2-transformation $\lambda_F \colon \otimes_{\twoP_2} \circ F^2  \to F \circ \otimes_{\twoP_1}$, which are compatible with the structural morphisms of 2-stacks $P_1: \twoP_1 \to \cX$ and $P_2: \twoP_2 \to \cX$ and with the natural 2-transformations $\ass$ and $\comm$ underlying $ \twoP_1$ and $\twoP_2$.
An \textbf{equivalence of Picard 2-stacks on $\cX$} is an additive 2-functor whose underlying morphism of 2-stacks is an equivalence of 2-stacks.

Denote by $2{\bbPicard}(\cX, \bS)$ the category whose objects are Picard 2-stacks on $\cX$ and whose arrows are isomorphism classes of additive 2-functors. Applying \cite[Cor 6.5]{Tatar} to the site $\bS(\cX)$, we have the following equivalence of categories  
\begin{equation}\label{st2Stack}
 2\st: \cD^{[-2,0]}(\bS (\cX)) \longrightarrow  2{\bbPicard}(\cX, \bS).
\end{equation}
where $\cD^{[-2,0]}(\bS (\cX))$ is the derived category of length three complexes of abelian sheaves on $\cX$. Via this equivalence, Picard 2-stacks (resp. Picard stacks) on $\cX$ correspond to length three (resp. two) complexes of abelian sheaves on $\cX$. Therefore, the theory of Picard stacks is included in the theory of Picard 2-stacks.
We denote by $[\,\,]$ the inverse equivalence of $2\st$.

If $\cP$ is a Picard stack over a site $\bS$ we define its \textbf{classifying groups} $\cP^i$ for $i=1,0$ in the following way: $\cP^1$ is the group of isomorphism classes of objects of $\cP$ and
 $\cP^0$ is the group of automorphisms of the neutral object $e$ of $\cP$. 
 We define the classifying groups $\twoP^i$ for $i=2,1,0$ of a Picard 2-stack $\twoP$ over a site $\bS$ recursively:
 $\twoP^2$ is the group of equivalence classes of objects of $\twoP$,
 $\twoP^1 = {\cAut}^1(e)$ and $\twoP^0 = {\cAut}^0(e)$ where ${\cAut}(e)$ is the Picard stack of automorphisms of the neutral object $e$ of $\twoP$.
 Explicitly, $\twoP^1$ is the group of isomorphism classes of objects of ${\cAut}(e)$ and 
 $\twoP^0$ is the group of automorphisms of the neutral object of ${\cAut}(e)$.
We have the following link between the classifying groups $\twoP^i$ and the cohomology groups $\HH^{i}(\bS,[\twoP] )$ of the complex $[\twoP]$ associated to $\twoP$ via (\ref{st2Stack}):
$\twoP^i \cong \HH^{i-2}(\bS,[\twoP])$ for $i=0,1,2.$

If two Picard 2-stacks $\twoP$ and $\twoP'$ are equivalent as Picard 2-stacks, then their classifying groups are isomorphic: $ \twoP^i \cong \twoP'^i$ for $i=2,1,0$. The inverse affirmation is not true as explained in \cite[Rem 1.3]{B11}.

Let $S$ be an arbitrary scheme and denote by $\bS$ the site of $S$ for a Grothendieck topology. Let $\cX$ be an $S$-stack. A stack (resp. a Picard 2-stack) on $\cX$ will be called an $S$-stack (resp. a Picard $S$-2-stack) on $\cX$.

  \section{Gerbes with abelian band on a stack}\label{bands}

Let $F$ be an abelian sheaf on a site $\bS$.
Denote by $\bGerbe_{\bS} (F)$ the fibered 2-category of $F$-gerbes over $\bS$.

\begin{lemma}\label{lem:Gerbe2-stack}
The fibered 2-category $\bGerbe_{\bS} (F)$ of $F$-gerbes is a Picard 2-stack over $\bS$.
\end{lemma}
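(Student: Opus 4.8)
The plan is to check that $\bGerbe_{\bS}(F)$ satisfies the two requirements in the definition of a Picard 2-stack over $\bS$: that it is a 2-stack, and that it carries a strictly commutative Picard structure in the sense of \cite[\S 1]{BT}. Both ingredients have in essence been recorded in the paragraph preceding the statement, so the work consists in matching them to the formal axioms listed in the Notation section. First I would confirm the 2-stack axioms. The gluing condition on objects, namely the effectiveness of 2-descent, is \cite[Examples 1.11 $i)$, \S 2.6]{Breen94}, as already noted. For the gluing condition on arrows I would fix an object $U$ of $\bS$ and two $F$-gerbes $\cG_1, \cG_2$ over $\bS_{|U}$, and verify that the fibered category $\mathrm{Arr}_{\bGerbe_{\bS_{|U}}(F)}(\cG_1,\cG_2)$, whose objects are morphisms of $F$-gerbes $\cG_1 \to \cG_2$ and whose arrows are the 2-morphisms between them, is a stack over $\bS_{|U}$. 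This is part of Giraud's theory: morphisms of $F$-gerbes and their natural transformations are built from local data in $\cG_1$ and $\cG_2$, which are themselves stacks, so the descent conditions for the stack of morphisms reduce to the descent already available inside $\cG_1$ and $\cG_2$.

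Next I would produce the Picard structure. The group law $\otimes$ is the contracted product $\wedge^F$ of \cite[Chp IV 2.4.3]{Giraud}: for $F$-gerbes $\cG_1, \cG_2$ the gerbe $\cG_1 \wedge^F \cG_2$ is again an $F$-gerbe, and this assignment is 2-functorial, yielding a morphism of 2-stacks $\otimes \colon \bGerbe_{\bS}(F) \times_{\bS} \bGerbe_{\bS}(F) \to \bGerbe_{\bS}(F)$. I would take the neutral object to be the trivial $F$-gerbe $\cTors(F)$ of $F$-torsors and the inverse of $\cG$ to be the inverse $F$-gerbe furnished by \cite[Chp IV]{Giraud} (obtained by composing the band of $\cG$ with the inversion automorphism of $F$), while the natural 2-transformations $\ass$ and $\comm$ come from the canonical associativity and symmetry equivalences of the contracted product. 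It then remains to check that for each object $U$ of $\bS$ the fiber $\bGerbe_{\bS_{|U}}(F)$ is a strictly commutative Picard 2-category, that is, that these data satisfy the coherence constraints, and that the whole structure is compatible with restriction along morphisms of $\bS$.

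The main obstacle is precisely this last coherence verification. At the 2-categorical level one must check not only that $\ass$ and $\comm$ are natural equivalences but that they satisfy the higher coherence axioms — the pentagon for $\ass$, the hexagon relating $\ass$ and $\comm$, and the symmetry condition on $\comm$ — all holding up to suitable invertible modifications which in turn satisfy their own axioms. Rather than constructing these 2-transformations and modifications by hand, I would reduce each of them to the corresponding canonical isomorphism of gerbes established for the contracted product in \cite[Chp IV]{Giraud}, where the associativity, commutativity, unit and inverse isomorphisms are all produced from the universal property of $\wedge^F$. The remaining bookkeeping, namely that these fiberwise Picard 2-category structures glue to genuine 2-transformations over $\bS$, is then routine, since every piece is characterized by a universal property and hence commutes with restriction to $\bS_{|V}$.
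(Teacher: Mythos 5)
Your proposal is correct and takes essentially the same route as the paper, which proves this lemma in the paragraph immediately preceding its statement: the 2-stack property is obtained from the effectiveness of 2-descent for objects via \cite[Examples 1.11 $i)$, \S 2.6]{Breen94}, and the Picard structure from the contracted product of $F$-gerbes in \cite[Chp IV 2.4.3]{Giraud}. The only difference is that you spell out the arrow-gluing condition and the coherence of $\ass$ and $\comm$, which the paper leaves implicit in its citations.
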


\begin{proof} By \cite[\S 2.6]{Breen94} the 2-descent is effective for objects  of $\bGerbe_{\bS} (F)$. Moreover, morphisms of gerbes are just morphisms of stacks and so by \cite[Examples 1.11 i)]{Breen94}, the gluing condition on arrows of $\bGerbe_{\bS} (F)$ is satisfied. Thus, the fibered 2-category $\bGerbe_{\bS} (F)$ is in fact a 2-stack over $\bS$. In \cite[Chp IV Prop 2.4.1 (i)]{Giraud} Giraud has defined the contracted product of gerbes
	(see in particular \textit{loc.cit} Example 2.4.3 for the case of gerbes bound by abelian sheaves).  He also showed that this contracted product satisfies associativity and commutativity constraints (see \cite[Chp IV Cor 2.4.2 (i) and (ii)]{Giraud}).
	Hence we can conclude that
	the contracted product of $F$-gerbes endows the 2-stack of $F$-gerbes with a Picard structure. 
	\end{proof}

\subsection{Homological interpretation of gerbes over a site}
Let $F$ be an abelian sheaf on a site $\bS$.
The \textbf{classifying groups} $\bGerbe_{\bS}^i (F)$ for $i=2,1,0$ 
of the Picard 2-stack $\bGerbe_{\bS} (F)$ are
\begin{itemize}
	\item $\bGerbe_{\bS}^2 (F)$, the abelian group of $F$-equivalence classes of $F$-gerbes;
	\item $\bGerbe_{\bS}^1 (F)$, the abelian group of isomorphism classes of morphisms of $F$-gerbes from a $F$-gerbe to itself. 
	\item $\bGerbe_{\bS}^0 (F)$, the abelian group of automorphisms of a morphism of $F$-gerbes from a $F$-gerbe to itself. 
\end{itemize}

\begin{theorem}\label{thm:H^2-general}
Let $F$ be an abelian sheaf on a site $\bS$. Then the Picard 2-stack $\bGerbe_{\bS} (F)$ of $F$-gerbes is equivalent (as Picard 2-stack) to the Picard 2-stack associated to the complex $F[2]$, where $F[2]=[F \to 0 \to 0]$ with $F$ in degree -2:
$$\bGerbe_{\bS} (F) \cong 2\st(F[2]).$$
 In particular, for $i=2,1,0,$ we have an isomorphism of abelian groups between the $i$-th classifying group $ \bGerbe^i_{\bS}(F)$ and the cohomological group $\HH^i(\bS,F).$
\end{theorem}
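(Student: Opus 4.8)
The plan is to leverage the fact that $2\st$ of (\ref{st2Stack}) is an \emph{equivalence of categories}, so that exhibiting an equivalence of Picard 2-stacks is the same as producing an isomorphism in $\cD^{[-2,0]}$ between the complex $[\bGerbe_{\bS}(F)]$ and $\tau_{\leq 0}\R\Gamma(\bS,F[2])$. Rather than compute $[\bGerbe_{\bS}(F)]$ abstractly, I would build an explicit additive 2-functor $\Phi$ between the two Picard 2-stacks and then verify it is an equivalence. First I would fix an injective resolution $0 \to F \to I^0 \to I^1 \to I^2 \to \cdots$ in $\Ab(\bS)$, so that $\R\Gamma(\bS,F[2])$ is represented by $\Gamma(\bS,I^\bullet)[2]$ and its truncation $\tau_{\leq 0}\R\Gamma(\bS,F[2])$ becomes the explicit length-three complex $[\Gamma(\bS,I^0) \to \Gamma(\bS,I^1) \to Z^2]$ in degrees $[-2,0]$, with $Z^2 = \ker\big(\Gamma(\bS,I^2) \to \Gamma(\bS,I^3)\big)$. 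Under Tatar's description of $2\st$, the objects of $2\st\big(\tau_{\leq 0}\R\Gamma(\bS,F[2])\big)$ are then global sections $\omega \in Z^2$, a 1-arrow $\omega_1 \to \omega_2$ is a section $\eta \in \Gamma(\bS,I^1)$ with $d\eta = \omega_2 - \omega_1$, and a 2-arrow is a section of $\Gamma(\bS,I^0)$ with prescribed coboundary.

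Next I would construct $\Phi$ by turning this cocyclic data into gerbes via Giraud's classification \cite[Chp IV]{Giraud}: to $\omega \in Z^2$ associate the $F$-gerbe $\cG_\omega$ of trivializations cut out by $\omega$ through the resolution, so that the class of $\cG_\omega$ in $\HH^2(\bS,F)$ is the image of $\omega$ under $Z^2 \twoheadrightarrow \HH^2(\bS,F) = Z^2 / d\,\Gamma(\bS,I^1)$; send a 1-arrow $\eta$ to the induced morphism of gerbes and a 2-arrow to the induced 2-morphism. The acyclicity of the injective sheaves $I^0,I^1$ is precisely what makes these assignments well defined, and morally $\cG_\omega$ realizes the delooping $B^2F$. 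Crucially, $\Phi$ must be equipped with an additivity 2-transformation $\lambda_\Phi$ identifying the contracted product of gerbes \cite[Chp IV 2.4.3]{Giraud} with the additive structure on $2\st$ coming from the addition of cocycles, compatibly with the constraints $\ass$ and $\comm$.

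I would then check that $\Phi$ induces isomorphisms on all three classifying groups. On $\twoP^2$ this is exactly the classical bijection between $F$-equivalence classes of $F$-gerbes and $\HH^2(\bS,F)$ (surjectivity being that of $Z^2 \twoheadrightarrow \HH^2(\bS,F)$ together with Giraud's classification, injectivity being that cohomologous cocycles give equivalent gerbes); on $\twoP^1$ and $\twoP^0$ the verifications reduce to the homological interpretation of torsors and of global sections, giving $\bGerbe^1_{\bS}(F) \cong \HH^1(\bS,F)$ and $\bGerbe^0_{\bS}(F) \cong \HH^0(\bS,F)$. Finally, since $\Phi$ corresponds under $2\st$ to a morphism of length-three complexes that is an isomorphism on every cohomology object, that morphism is a quasi-isomorphism, hence an isomorphism in $\cD^{[-2,0]}$, and therefore $\Phi$ is an equivalence of Picard 2-stacks; the ``In particular'' clause is then immediate from the matching of $\HH^i\big(\tau_{\leq 0}\R\Gamma(\bS,F[2])\big) = \HH^i(\bS,F)$ with the classifying groups. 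It is essential here to have the genuine 2-functor $\Phi$ and not merely an abstract equality of classifying groups: as recorded in \cite[Rem 1.3]{B11}, matching classifying groups alone does not force an equivalence, because the underlying Postnikov $k$-invariants may differ.

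The hard part will be the coherent construction of $\Phi$, and in particular of its additivity datum $\lambda_\Phi$: one must check the full tower of compatibilities (the pentagon- and hexagon-type coherences for $\ass$ and $\comm$, and the matching of the contracted product with cocycle addition) so that $\Phi$ is a bona fide additive 2-functor rather than an assignment correct only on equivalence classes. This bookkeeping, carried out with the explicit injective resolution and Giraud's cocyclic description, is the technical heart of the argument; everything else is either Tatar's equivalence $2\st$ or the classical computations of $\HH^i(\bS,F)$.
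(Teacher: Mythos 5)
Your proposal is correct in outline, but it takes a genuinely different route from the paper. The paper does not touch injective resolutions or cocycles at all: it factors the equivalence through Breen's canonical identification $\bGerbe_{\bS}(F) \cong \bTors(\cTors(F))$ of $F$-gerbes with torsors under the Picard stack of $F$-torsors \cite[Prop 2.14]{Breen94}, and then invokes the homological classification of torsors under a Picard stack from \cite[Thm 0.1]{BT}, which says precisely that $\bTors(\cTors(F)) = 2\st\big(\tau_{\leq 0}{\mathrm{R}}\Gamma(\bS,[\cTors(F)][1])\big)$; combined with the classical $\cTors(F)=2\st\big(\tau_{\leq 0}{\mathrm{R}}\Gamma(\bS,F[1])\big)$ of \cite[Expos\'e XVIII, Prop 1.4.15]{SGA4}, this gives the statement in three lines, and the identification of classifying groups falls out of the same citation. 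Your route instead rebuilds the equivalence by hand from an injective resolution and Giraud's cocyclic description. This is viable and more self-contained, but what you call the ``technical heart'' --- the coherent construction of $\Phi$ and of $\lambda_\Phi$, including stackification of the prestack $\cG_\omega$ and the matching of contracted products with cocycle addition --- is exactly the content that the paper outsources to Breen and to \cite{BT}; carried out in full it would be considerably longer than the paper's proof. One point you should make explicit: to conclude that $\Phi$ is an equivalence it is not enough to check isomorphisms on the three \emph{global} classifying groups (the paper's own warning via \cite[Rem 1.3]{B11} cuts both ways here); you need the induced maps on the homotopy \emph{sheaves}, i.e.\ the same three isomorphisms over every object $U$ of $\bS$, naturally in $U$, before you may identify the image of $\Phi$ under $[\,\,]$ with a quasi-isomorphism in $\cD^{[-2,0]}$. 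Since your identifications ($F$-equivalence classes of gerbes with $\HH^2$, self-equivalences of the neutral gerbe with $F$-torsors, automorphisms of the identity with $\Gamma(F)$) are all local and compatible with restriction, this is a matter of stating it correctly rather than a genuine obstruction.
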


\begin{proof}
It is a classical result that via the equivalence of categories stated in \cite[Expos\'e XVIII, Prop 1.4.15]{SGA4D}, the complex $F[1]$ corresponds to the Picard stack $\cTors(F)$ of $F$-torsors:  $\cTors(F)= 2\st (F[1])$.
A higher dimensional analogue of the notion of torsor under an abelian sheaf is 
the notion of torsor under a Picard stack, which was introduced by Breen in \cite[Def 3.1.8]{Breen92}
and studied by the first author in \cite[\S 2]{B13} (remark that in fact in \cite{BT} the first author introduces the notion of torsor under a Picard 2-stack, see also \cite{B12}, \cite{B09} and \cite{B20}). Hence we have the notion of $\cTors(F)$-torsors.
The contracted product of torsors under a Picard 2-stack, introduced in \cite[Def 2.11]{BT}, endows the 2-stack $\bTors(\cTors(F))$ of $\cTors(F)$-torsors with a Picard structure, and by \cite[Thm 0.1]{BT} this Picard 2-stack $\bTors(\cTors(F))$ corresponds, via the equivalence of categories (\ref{st2Stack}), to the complex $[\cTors(F)][1]$:
\begin{equation}
\bTors(\cTors(F)) = 2\st (F[2]).
\label{eq:[Tors]}
\end{equation} 

In \cite[Prop 2.14]{Breen94} Breen constructs a canonical equivalence of Picard 2-stacks between the Picard 2-stack $\bGerbe_{\bS} (F)$ of $F$-gerbes and the Picard 2-stack $\bTors(\cTors(F))$ of $\cTors(F)$-torsors:
\begin{equation}
\bGerbe_{\bS} (F) \cong \bTors(\cTors(F)).
\label{eq:[Gerbe-Tors]}
\end{equation}
This equivalence and the equality (\ref{eq:[Tors]}) furnish the expected equivalence 
$\bGerbe_{\bS} (F) \cong 2\st (F[2]).$ The classifying groups of the Picard 2-stack $\bGerbe_{\bS} (F)$ are therefore
$$\bGerbe^i_{\bS}(F) \cong \HH^{i-2}(\bS,F[2]) = \HH^{i}(\bS,F).$$
\end{proof}

 \begin{remark} Via the cohomological interpretation (\ref{eq:[Tors]}) of torsors under the Picard stack of $F$-torsors, the equivalence of Picard 2-stacks (\ref{eq:[Gerbe-Tors]}) is the geometrical counterpart of the canonical isomorphism in cohomology
$\HH^{2}(\bS,F) \cong \HH^{1}(\bS,F[1]).$
\end{remark}

\subsection{Gerbes on a stack}
Let $\cX$ be a stack over a site $\bS$ and denote by $\bS (\cX)$ the site associated to $\cX$. 
Applying \cite[Chp IV]{Giraud} to the site $\bS(\cX)$,  we get the notion 
of $\cF$-gerbes on the stack $\cX$, with $\cF$ an abelian sheaf on $\cX$. 
We recall briefly this notion.

An \textbf{$\cF$-gerbe} is a gerbe $\cG$ on $\cX$ such that
 for any object $(U,x)$ of $\bS(\cX)$ the fibered product $ U \times_{x,\cX,P} \cG $ is a $\cF_{U,x}$-gerbe over $\bS_{|U}$
(here $P: \cG \to \cX$ is the structural morphism): in particular for each $i$ indexing a covering $\{U_i \to U \}_i$ of $U,$ it exists an object $g_i$ of $(U \times_{x,\cX,P} \cG)(U_i)$ and an isomorphism $\cF_{U,x |U_i} \to \uAut  (g_i)$ of sheaves of groups on $U_i$ (see \cite[Def 2.3]{Breen94}). Consider now an $\cF$-gerbe $\cG$ and an $\cF'$-gerbe $\cG'$ on $\cX$. Let $u: \cF \to \cF'$ a morphism of abelian sheaves on $\cX$. A morphism of gerbes $m: \cG \to \cG'$ is an \textbf{$u$-morphism} if $u$ is compatible with the morphism of bands 
$\mathrm{Band}(\uAut  (g)_{U,x}) \to \mathrm{Band}(\uAut  (m(g))_{U,x})$  (see \cite[Chp IV 2.1.5.1]{Giraud}). As in \cite[Chp IV Prop 2.2.6]{Giraud} an $u$-morphism $m: \cG \to \cG'$ is fully faithful if and only if $u: \cF \to \cF'$ is an isomorphism, in which case $m$ is an equivalence of gerbes. If $\cG$ and $\cG'$ are two $\cF$-gerbes on $\cX$, instead of $\id_{\cF}$-morphism $ \cG \to \cG'$ we use the terminology \textbf{$\cF$-equivalence $ \cG \to \cG'$ of $\cF$-gerbes} on $\cX$.

 $\cF$-gerbes on $\cX$ build a Picard 2-stack on $\cX$, that we denote by 
 \[ \bGerbe_{\bS(\cX)} (\cF) \]
 whose group law is given by the contracted product of $\cF$-gerbes on $\cX$ ( \cite[Chp IV 2.4.3]{Giraud}). Its neutral element is the stack $\cTors(\cF)$ of $\cF$-torsors on $\cX$, which is called the \textbf{neutral $\cF$-gerbe}.
Applying Theorem \ref{thm:H^2-general} to the abelian sheaf $\cF$ on the site $\bS (\cX)$ (see Def \ref{def:SheafOnStack}) we get

\begin{corollary}\label{thm:H^2}
We have the following equivalence of Picard 2-stacks 
$$\bGerbe_{\bS(\cX)} (\cF) \cong 2\st(\cF[2]).$$
 In particular, $ \bGerbe^i_{\bS(\cX)}(\cF) \cong \HH^i(\cX,\cF)$ for $i=2,1,0.$
\end{corollary}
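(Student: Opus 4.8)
The plan is to derive this statement as a direct specialization of Theorem \ref{thm:H^2-general}, which was proved for an \emph{arbitrary} site, to the site $\bS(\cX)$ associated to $\cX$ in Section \ref{recall}. First I would record that the hypotheses of that theorem are met after replacing $\bS$ by $\bS(\cX)$ and $F$ by $\cF$: the category $\bS(\cX)$ is a genuine Grothendieck site (Section \ref{recall}), and by Definition \ref{def:SheafOnStack} an abelian sheaf $\cF$ on $\cX$ is by construction an abelian sheaf on $\bS(\cX)$. Thus Theorem \ref{thm:H^2-general} applies over $\bS(\cX)$ and outputs an equivalence of Picard 2-stacks $\bGerbe_{\bS(\cX)}(\cF) \cong 2\st\big(\tau_{\leq 0}\R\Gamma(\bS(\cX),\cF[2])\big)$.

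The second step is to identify each object in this conclusion with the corresponding object in the corollary. On the geometric side, the Picard 2-stack $\bGerbe_{\bS}(\cF)$ of $\cF$-gerbes on $\cX$ was defined in the preceding subsection precisely by applying Giraud's theory \cite[Chp IV]{Giraud} to the site $\bS(\cX)$, with Picard structure given by the contracted product of $\cF$-gerbes (as in Lemma \ref{lem:Gerbe2-stack}); this is exactly the Picard 2-stack that Theorem \ref{thm:H^2-general} produces over $\bS(\cX)$. On the homological side, the cohomology of $\cX$ was defined as the right derived functor of $\Gamma(\cX,-)$ on $\Ab(\cX)$, so $\R\Gamma(\cX,\cF[2]) = \R\Gamma(\bS(\cX),\cF[2])$, and the functor $2\st$ appearing in the statement is the equivalence (\ref{st2Stack}) attached to $\bS(\cX)$. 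Substituting these identifications gives the claimed equivalence $\bGerbe_{\bS}(\cF) \cong 2\st\big(\tau_{\leq 0}\R\Gamma(\cX,\cF[2])\big)$.

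For the assertion on classifying groups I would invoke the ``in particular'' part of Theorem \ref{thm:H^2-general} over $\bS(\cX)$, which yields $\bGerbe^i_{\bS}(\cF) \cong \HH^i(\bS(\cX),\cF)$ for $i=2,1,0$; the right-hand side is by definition $\HH^i(\cX,\cF)$. Equivalently, classifying groups are preserved by the equivalence of Picard 2-stacks just established, and the classifying groups of $2\st\big(\tau_{\leq 0}\R\Gamma(\cX,\cF[2])\big)$ are the cohomology groups $\HH^i(\cX,\cF)$.

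The content of this corollary is almost entirely a matter of matching definitions: checking that the separately introduced stack-theoretic notions — sheaves on $\cX$, the cohomology $\HH^i(\cX,-)$, $\cF$-gerbes on $\cX$, Picard 2-stacks on $\cX$, and the functor $2\st$ — all coincide with the plain site-theoretic notions over $\bS(\cX)$. The main, and only mildly delicate, point is to confirm that the proof of Theorem \ref{thm:H^2-general} is genuinely site-independent; that is, that Breen's equivalence \cite[Prop 2.14]{Breen94} between $\cF$-gerbes and torsors under the Picard stack $\cTors(\cF)$, together with the cohomological interpretation of such torsors from \cite[Thm 0.1]{BT}, hold verbatim over $\bS(\cX)$. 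Since $\bS(\cX)$ is an ordinary Grothendieck site, all the cited results apply without modification, and no new argument is required.
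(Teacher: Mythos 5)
Your proposal is correct and follows exactly the paper's route: the corollary is obtained by applying Theorem \ref{thm:H^2-general} to the abelian sheaf $\cF$ over the site $\bS(\cX)$, with the identifications of $\HH^i(\bS(\cX),-)$ with $\HH^i(\cX,-)$ and of the gerbe and $2\st$ constructions over $\bS(\cX)$ being matters of definition. Your additional remark that one should check the cited inputs (Breen's equivalence and the torsor cohomology result) are site-independent is a reasonable precaution, though the paper treats this as immediate.
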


\noindent Hence, $\cF$-equivalence classes of $\cF$-gerbes on $\cX$, which are the elements of the 0th-homotopy group $\bGerbe^2_{\bS(\cX)} (\cF)$, are parametrized by cohomological classes of $\HH^2(\cX,\cF)$.

\begin{remark}
$ \bGerbe_{\bS(\cX)} (\cF)$ is a Picard $\bS(\cX)$-2-stack. Via the structural morphism $F: \cX \to \bS,$ we can view $ \bGerbe_{\bS(\cX)} (\cF)$ also as a Picard $\bS$-2-stack $ \bGerbe_{\bS} (\cF)$. In this case we have that $\bGerbe_{\bS} (\cF) \cong 2\st( \tau_{\leq 0} RF_*(\cF[2]))$ where $\tau_{\leq 0}$ is the good truncation in degree 0.
We will not use this fact in the paper and therefore we omit the proof.
\end{remark}

\subsection{2-descent of $\GG_m$-gerbes}\label{descofgerbes}
We finish this section proving the effectiveness of the 2-descent of $\GG_m$-gerbes with respect to a faithfully flat morphism of schemes $p : S' \to S$ which is quasi-compact or locally of finite presentation.
We will need the \textbf{semi-local description of gerbes} done by Breen in \cite[\S 4] {Breen10} and \cite[\S 2.3]{Breen94tan}, that we recall only in the case of $\GG_{m}$-gerbes.
Denote by $\cTors(\GG_{m})$ the Picard stack of $\GG_{m}$-torsors. 
 According to Breen, to have a $\GG_{m}$-gerbe $\cG$ over a site $\bS$ is equivalent to have the data  
\begin{equation}
\big((\cTors(\GG_{m,U}),\Psi_x),(\psi_x,\xi_x) \big)_{x \in \cG(U), U \in \bS}
\label{eq:datagerbe}
\end{equation}
indexed by the objects $x$ of the $\GG_{m}$-gerbe $\cG$ (recall that $\cG$ is locally not empty), where
\begin{itemize}
	\item $\Psi_x \, : \, \cG_{\vert U}  \to \cTors(\GG_{m,U}) $ is an equivalence of $U$-stacks
	between the restriction $\cG_{\vert U}$ to $U$ of the $\GG_{m}$-gerbe $\cG$ and the neutral gerbe $\cTors(\GG_{m,U})$. This equivalence is determined by the object $x$ in $\cG(U)$, 
	
	\item $\psi_x = pr_1^* \Psi_x \circ (pr_2^* \Psi_x)^{-1}\, : \, \cTors (pr_2^*\GG_{m,U}) \to \cTors (pr_1^*\GG_{m,U})$ is an equivalence of stacks
over $U \times_S U$ (here $pr_{i}: U\times_S U \to U$ are the projections), which restricts to the identity when pulled back via the diagonal morphism $\Delta: U \to U \times_S U$, and 

	\item $\xi_x \, : \, pr_{23}^* \psi_x \circ  pr_{12}^* \psi_x \Rightarrow pr_{13}^* \psi_x $ is a isomorphism of cartesian $S$-functors between morphisms of stacks on $U\times_S U\times_S U$ (here $pr_{ij}: U\times_S U\times_S U \to U \times_S U$ are the partial projections),
which satisfies the compatibility condition 
\begin{equation}
 pr_{134}^* \xi_x \circ [pr_{34}^* \psi_x * pr_{123}^* \xi_x ] =
  pr_{124}^* \xi_x \circ [pr_{234}^* \xi_x * pr_{12}^* \psi_x ]
\label{eq:datagerbe-2arrows}
\end{equation} 
when pulled back to $U \times_S U \times_S U \times_S U :=U^4$ (here $pr_{ijk}: U^4 \to U\times_S U\times_S U$ and $pr_{ij}: U^4 \to  U\times_S U$ are the partial projections. See \cite[(6.2.7)-(6.2.8)]{Breen90} for more details).
\end{itemize}
Therefore, the $\GG_{m}$-gerbe $\cG$ may be reconstructed from the local data $(\cTors(\GG_{m}),\Psi_x)_x$ using the transition data $(\psi_x,\xi_x)$. 
We call the equivalences of stacks $\Psi_x$ the \textbf{local neutralizations} of the $\GG_{m}$-gerbe $\cG$ defined by the local objects $x \in \cG(U)$.  The transition data $(\psi_x,\xi_x)$ are in fact 2-descent data. See Appendix for this reconstruction of a $\GG_{m}$-gerbe via local neutralizations and 2-descent data.

In \S \ref{proofGTC} we will need the semi-local description of a $\GG_m$-equivalence class of a $\GG_m$-gerbe which consists in the following data: a family $(\cTors^1(\GG_{m,U}))_{U \in \bS}$ of  
 groups of isomorphism classes of $\GG_m$-torsors, bijections 
 $  \cTors^1 (pr_2^*\GG_{m,U}) \to \cTors^1 (pr_1^*\GG_{m,U})$ 
 of their pull-backs on $U\times_S U $ via the projections  $pr_{i}: U\times_S U \to U,$ and compatibility conditions 
 on the pull-back on $U\times_S U \times_S U$ of these bijections (here we use the above notations).

\begin{remark}\label{rm:-1} In this paper, Breen's semi-local description of gerbes allows us to reduce of one the degree of the cohomology groups involved: instead of working with gerbes, which are cohomology classes of $\HH^2(\bS,\GG_{m})$, we can work with torsors, which are cohomology classes of $\HH^1(\bS,\GG_{m}).$
\end{remark}

\begin{theorem}\label{thm:DescentGerbesOverSchemes}
Let $p:S' \to S$ be a faithfully flat morphism of schemes which is quasi-compact or locally of finite presentation.  To have a $\GG_{m, S}$-gerbe over $\bS$ is equivalent to have a triple  
\[(\cG',\varphi, \gamma)\]
where $\cG'$ is a $\GG_{m, S'}$-gerbe over $\bS '$ and $(\varphi, \gamma)$ are 2-descent data on $\cG'$ with respect to $p : S'\to S$. More precisely,
 \begin{itemize}
 \item  $\cG'$ is a $\GG_{m, S'}$-gerbe over ${\bS}'$,
\item   $\varphi : p_1^* \cG'  \to  p_2^* \cG'$ is  an equivalence of gerbes over $S' \times_S S'$
, where $p_{i}: S' \times_S S' \to S'$ are the natural projections,
\item  $\gamma  : \ p_{23}^*\varphi \circ p_{12}^* \varphi \Rightarrow p_{13}^* \varphi$ 
is a natural isomorphism over  $S' \times_S S' \times_S S'$,
 where  $p_{ij}:S' \times_S S' \times_S S' \to S' \times_S S'$ are the partial projections,
 \end{itemize}
 such that over $S' \times_S S' \times_S S' \times_S S'$  the compatibility condition
\begin{equation}
p_{134}^* \gamma \circ [p_{34}^* \varphi * p_{123}^* \gamma ] =
  p_{124}^* \gamma \circ [p_{234}^* \gamma * p_{12}^* \varphi ]
  \label{eq:descentgerbe-2arrows}
\end{equation} 
is satisfied, where $p_{ijk}: S' \times_S S' \times_S S' \times_S S' \times_S S' \to S' \times_S S' \times_S S'$ and $p_{ij}:  S' \times_S S' \times_S S' \times_S S' \to  S' \times_S S'$ are the partial projections.

Under this equivalence, the pull-back $p^*:\bGerbe_S (\GG_{m,S}) \to \bGerbe_{S'} (\GG_{m, S'})$ is the additive 2-functor which forgets the 2-descent data: $p^*(\cG',\varphi, \gamma)=\cG'. $
\end{theorem}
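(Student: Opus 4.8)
The plan is to exhibit the pullback 2-functor $p^{*}$ together with its canonical 2-descent data and to prove that it realizes $\bGerbe_S(\GG_{m, S})$ as equivalent to the Picard $2$-stack of triplets $(\cG',\varphi,\gamma)$. Following Remark \ref{rm:-1}, the decisive reduction is to pass from gerbes to torsors through the equivalence of Picard 2-stacks $\bGerbe_S(\GG_{m}) \cong \bTors(\cTors(\GG_{m}))$ of (\ref{eq:[Gerbe-Tors]}), together with Breen's semi-local description (\ref{eq:datagerbe}): both encode a $\GG_m$-gerbe and its morphisms entirely in terms of $\GG_m$-torsors (i.e. invertible sheaves) and their transition isomorphisms. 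In this way the $2$-descent of gerbes along $p$ is governed by the ordinary faithfully flat descent of invertible sheaves, which holds for a faithfully flat quasi-compact (resp. locally of finite presentation) morphism by Grothendieck's descent of quasi-coherent sheaves. This reduction is essential: a general such $p$ need not be a covering for the \'etale topology, so one cannot simply invoke that $\bGerbe_S(\GG_{m, S})$ is a $2$-stack (Lemma \ref{lem:Gerbe2-stack}).

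First I would construct the forward 2-functor. Given a $\GG_{m, S}$-gerbe $\cG$ over $\bS$, set $\cG'=p^{*}\cG$. The canonical identifications $p_1^{*}p^{*}\cong p_2^{*}p^{*}$ over $S'\times_S S'$ furnish the equivalence $\varphi$, the associated cocycle of canonical identifications over the triple fibred product furnishes the natural isomorphism $\gamma$, and the compatibility (\ref{eq:descentgerbe-2arrows}) then holds tautologically over the quadruple product. This assigns to each gerbe a triplet $(\cG',\varphi,\gamma)$, functorially in $\cG$, and by construction the composite with the forgetful 2-functor $(\cG',\varphi,\gamma)\mapsto\cG'$ is exactly the pullback $p^{*}$, as asserted.

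The heart of the argument is essential surjectivity, i.e. the effectiveness of the descent. Starting from a triplet $(\cG',\varphi,\gamma)$ satisfying the cocycle condition, I would apply Breen's semi-local description to $\cG'$, presenting it by local neutralizations $\Psi_x\colon \cG'_{\vert U}\to\cTors(\GG_{m,U})$ and transition data $(\psi_x,\xi_x)$, where each $\psi_x$ is an equivalence of neutral gerbes over $U\times_{S'}U$ and hence (by the identification $\cAut(e_{\bGerbe_S(\GG_m)})\cong\cTors(\GG_m)$ recalled above) a $\GG_m$-torsor, and each $\xi_x$ an isomorphism of such torsors. The equivalence $\varphi$ compares the local neutralizations across $p_1$ and $p_2$, while $\gamma$ and the compatibility (\ref{eq:descentgerbe-2arrows}) turn the transition torsors into a faithfully flat descent datum for invertible sheaves on the products built from $p\colon S'\to S$. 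By Grothendieck's faithfully flat descent for quasi-coherent sheaves these $\GG_m$-torsors descend to $\GG_m$-torsors over the corresponding products of $S$; feeding them into Breen's reconstruction (see the appendix) produces a $\GG_{m, S}$-gerbe $\cG$ over $\bS$ whose pullback is canonically equivalent to $(\cG',\varphi,\gamma)$. Full faithfulness on $1$- and $2$-morphisms is obtained in the same spirit: morphisms and $2$-morphisms of $\GG_m$-gerbes are, via (\ref{eq:[Gerbe-Tors]}), encoded by $\GG_m$-torsors and their morphisms, for which faithfully flat descent is again effective.

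The main obstacle is the precise matching of the two coherence conditions: one must verify that the $2$-descent datum $(\varphi,\gamma)$ on the gerbe, expressed through Breen's semi-local data, translates exactly into a classical ($1$-categorical) descent datum on the transition torsors, so that Breen's compatibility (\ref{eq:datagerbe-2arrows}) for $\cG'$ and the quadruple-product identity (\ref{eq:descentgerbe-2arrows}) for the descent datum combine into the single cocycle condition of a genuine faithfully flat descent datum of invertible sheaves, with no residual higher coherence. Once this bookkeeping is carried out, the effectiveness is inherited from the classical descent of invertible sheaves, and the claimed equivalence of Picard $2$-stacks follows.
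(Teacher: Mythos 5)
Your proposal is correct in outline but takes a genuinely different route from the paper's. The paper's proof is a two-line appeal to Lemma \ref{lem:Gerbe2-stack}: the pair $(\varphi,\gamma)$ satisfying (\ref{eq:descentgerbe-2arrows}) is a 2-descent datum in the sense of Section \ref{descendatastacks}, and since the fibered 2-category of $\GG_m$-gerbes is a 2-stack (Breen's effectiveness of 2-descent for gerbes), the triplet corresponds to a $\GG_{m,S}$-gerbe --- i.e.\ exactly the step you declare unusable. Your objection has real force when $p$ is merely faithfully flat and quasi-compact: such a $p$ need not be a covering for the \'etale or $fppf$ topology on $\bS$, and the 2-stack property only yields effectivity for coverings of the site, so in that case your reduction --- via Breen's semi-local description (\ref{eq:datagerbe}) and the identification of automorphisms of the neutral gerbe with $\cTors(\GG_m)$ --- to Grothendieck's faithfully flat descent of invertible sheaves is the more robust argument, and it is closer in spirit to how the paper itself exploits the semi-local description elsewhere (e.g.\ in Proposition \ref{prop:SnormalGerbesG-A} and Corollary \ref{cor:IsoOnTorsionG-A}). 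The price you pay, and the one point you should still close, is that the gerbe reconstructed from the descended transition torsors is a priori only locally non-empty for the topology generated by $p$ (it is neutralized over $S'$ and refinements thereof); to conclude that it is an object of $\bGerbe_S(\GG_{m,S})$ for the original site one must invoke the comparison of flat and \'etale cohomology of $\GG_m$ recalled in the Notation section, equivalently that a $\GG_m$-gerbe split by a faithfully flat cover is split \'etale-locally. Your full faithfulness on 1- and 2-morphisms is only sketched, but the paper's proof does not address it at all, so the two arguments are on equal footing there.
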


\begin{proof}  Let $(\cG',\varphi, \gamma)$ be a triplet as in the statement.
According to Appendix, the data $(\varphi, \gamma)$
satisfying the equality (\ref{eq:descentgerbe-2arrows}) are 2-descent data for the gerbe $\cG'$. As observed in Lemma \ref{lem:Gerbe2-stack}, the fibered 2-category of $\GG_{m}$-gerbes builds a 2-stack (that is, in particular, the 2-descent is effective for objects), and so $\cG'$ with its 2-descent data corresponds to a $\GG_{m,S}$-gerbe $\cG$ over $\bS$.
\end{proof}

\section{The Brauer group of a locally ringed stack}\label{brauergroup}

Let $\cX$ be a stack over a site $\bS$ and let $\bS(\cX)$ be its associated site.

A \textbf{sheaf of rings $\cA$ on $\cX$} is a system $(\cA_{U,u})$ 
verifying the conditions $(i)$ and $(ii)$ of Definition \ref{def:SheafOnStack}, where the $\cA_{U,u}$ 
are sheaves of rings on $\bS_{|U}$. Consider the sheaf of rings $\cO_\cX$ on $\cX$ given by the system $(\cO_{\cX \; U,u})$ with 
$\cO_{\cX \; U,u}$ the structural sheaf of $U$. 
The sheaf of rings $\cO_\cX$ is \textbf{the structural sheaf of the stack $\cX$} and the pair $(\cX, \cO_\cX)$  is a \textbf{ringed stack}. An \textbf{$\cO_\cX$-module $\cM$} is a system $(\cM_{U,u})$ 
verifying the conditions $(i)$ and $(ii)$ of Definition \ref{def:SheafOnStack}, where the $\cM_{U,u}$ 
are sheaves of $\cO_{U}$-modules on $\bS_{|U}$. An \textbf{$\cO_\cX$-algebra} $\cA$ is a 
system $(\cA_{U,u})$ 
verifying the conditions $(i)$ and $(ii)$ of Definition \ref{def:SheafOnStack}, where the $\cA_{U,u}$ 
are sheaves of $\cO_{U}$-algebras on $\bS_{|U}$. An $\cO_\cX$-module $\cM$ is \textbf{of finite presentation} if the $\cM_{U,u}$ are sheaves of $\cO_{U}$-modules of finite presentation.

 Now let $S$ be an arbitrary scheme and let $\bS_{\acute {e}t}$ be the \'etale site on $S$. Let $\cX=(\cX, \cO_\cX)$ be a \textbf{locally ringed $S$-stack}, i.e. a ringed stack such that, for 
 any object $(U,u)$ of the associated \'etale site $\bS_{\acute {e}t} (\cX)$, 
 and for any section $f \in \cO_{\cX \; U,u} (U)$, there exist a covering $\{(U_i,u_i) \rightarrow (U,u) \}_{i \in I}$ of $(U,u)$ such that for any $i \in I$ either $f |_{(U_i,u_i)}$ or $(1-f) |_{(U_i,u_i)}$ is invertible in $\Gamma (U_i,\cO_{\cX \; U_i,u_i}) $

An \textbf{Azumaya algebra} over $\cX$ is an $\cO_\cX$-algebra $\cA=(\cA_{U,u})$ of finite presentation as $\cO_\cX$-module which is, locally for the topology of $\bS_{\acute {e}t} (\cX)$, isomorphic to a matrix algebra, i.e. for any open $(U,u)$ of $\cX$ there exists a covering $\{(\phi_i, \Phi_i): (U_i,u_i) \to (U,u)\}_i$ in $\bS_{\acute {e}t} (\cX)$ such that $\cA _{U,u} \otimes _{\cO _{U,u}} \cO _{U_i} \cong M_{r_i}(\cO_{U_i,u_i})$ for any $i$. 
Azumaya algebras over $\cX$, that we denote by 
\[\Az(\cX),\]
 build an $S$-stack on $\cX$ by \cite[Expos\'e VIII 1.1, 1.2]{SGA1}
 (see also \cite[ (3.4.4)]{LaumonMoretB}).
Two Azumaya algebras $\cA$ and $\cA'$ over $\cX$ are \textbf{Brauer-equivalent} if there exist two locally free $\cO_\cX$-modules  $\cE$ and $\cE'$ of finite rank such that 
\[ \cA \otimes_{\cO_\cX} {\uEnd}_{\cO_\cX}(\cE) \cong \cA' \otimes_{\cO_\cX} {\uEnd}_{\cO_\cX}(\cE') .\]

\noindent The above isomorphism defines an equivalence relation because of the isomorphism of $\cO_\cX$-algebras ${\uEnd}_{\cO_\cX}(\cE) \otimes_{\cO_\cX} {\uEnd}_{\cO_\cX}(\cE') \cong {\uEnd}_{\cO_\cX}(\cE \otimes_{\cO_\cX} \cE') $. We denote by $\left[ \cA \right]$ the equivalence class of an Azumaya algebra $\cA$ over $\cX$. The set of equivalence classes of Azumaya algebra is a group under the group law given by 
$ [ \cA ] [ \cA' ] = [ \cA \otimes_{\cO_\cX} \cA' ]. $ A \textbf{trivialization} of an Azumaya algebra $\cA$ 
over $\cX$ is a couple $(\cL,a)$ with 
$\cL$ a locally free $\cO_\cX$-module and $a: \uEnd_{\cO_\cX}(\cL) \to \cA$  an isomorphism of sheaves of $\cO_\cX$-algebras. 
An Azumaya algebra $\cA$ is \textbf{trivial} if it exists a trivialization of $\cA$.
The class of any trivial Azumaya algebra is the neutral element of the above group law.
The inverse of a class $\left[ \cA \right]$ is the class $\left[ \cA^0 \right]$ with $\cA^0$ the opposite $\cO_\cX$-algebra of $\cA$.

\begin{definition}
Let $\cX= (\cX, \cO_\cX)$ be a locally ringed $S$-stack.
The \textbf{Brauer group} of $\cX$, denoted by $\Br(\cX)$, is the group of equivalence classes of 
Azumaya algebras over $\cX $. 
\end{definition}

$ \Br(-)$ is a functor from the category of locally ringed $S$-stacks (objects are locally ringed $S$-stacks and arrows are isomorphism classes of morphisms of locally ringed $S$-stacks) to the category $\Ab$ of abelian groups. Remark that the above definition generalizes to stacks the classical notion of Brauer group of a scheme: in fact if $\cX$ is a locally ringed $S$-stack associated to an $S$-scheme $X$, then $\Br(\cX)=\Br(X).$

\noindent
Consider the following sheaves of groups on $\cX$: the multiplicative group $\GG_{m, {\cX}}$, the linear general group $\GL(n, \cX),$ and the sheaf of groups $\PGL(n,\cX)$ on $\cX$ defined by the system  $(\PGL(n,\cX)_{U,u})$ where $\PGL(n,\cX)_{U,u}=\uAut\big(M_n(\cO_{\cX \; U,u})\big)$ (automorphisms of $M_n(\cO_{\cX \; U,u})$ as a sheaf of $\cO_{\cX \; U,u}$-algebras).
We have the following

\begin{lemma}\label{exseq}
Assume $n>0$. The sequence of sheaves of groups on $\cX$
\begin{equation}\label{eqno1}
1 \longrightarrow \GG_{m, {\cX}}  \longrightarrow  \GL(n, \cX) \longrightarrow  \PGL(n,\cX) \longrightarrow 1
\end{equation}
is exact.
\end{lemma}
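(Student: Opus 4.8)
The plan is to verify exactness of the sequence \eqref{eqno1} of sheaves of groups on $\cX$ at each of the three nontrivial spots, working locally on the site $\bS_{\acute{e}t}(\cX)$. By Definition \ref{def:SheafOnStack}, a sheaf of groups on $\cX$ is a compatible system $(\cF_{U,u})$ of sheaves on the sites $\bS_{|U}$, and exactness of a sequence of such sheaves can be checked componentwise: the sequence \eqref{eqno1} is exact as sheaves of groups on $\cX$ if and only if for every open $(U,u)$ of $\cX$ the corresponding sequence $1 \to \GG_{m,U} \to \GL(n)_U \to \PGL(n)_U \to 1$ of sheaves of groups on $\bS_{|U}$ is exact. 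First I would reduce the whole statement to this classical assertion over an ordinary scheme (or more precisely over the localized site $\bS_{|U}$), which is the standard exact sequence relating the center $\GG_m$, the general linear group, and the projective linear group.

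Next I would treat the three conditions for exactness. Injectivity of $\GG_{m,\cX} \to \GL(n,\cX)$ is immediate since $\GG_m$ embeds as the group of scalar matrices, and this is a monomorphism of sheaves on each $\bS_{|U}$. For exactness at $\GL(n,\cX)$, recall that $\PGL(n,\cX)_{U,u} = \uAut\big(M_n(\cO_{\cX\,U,u})\big)$ acts on the matrix algebra, and the map $\GL(n,\cX) \to \PGL(n,\cX)$ sends a matrix to the inner (conjugation) automorphism it induces; the kernel is exactly the centralizer of $M_n$ inside $\GL(n)$, which by the standard computation (the center of a matrix algebra consists of scalars) equals $\GG_{m,\cX}$, so the image of $\GG_{m,\cX}$ coincides with this kernel. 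The only genuinely nontrivial point is surjectivity of $\GL(n,\cX) \to \PGL(n,\cX)$, which must be understood as surjectivity of a morphism of sheaves, i.e. surjectivity after passing to a cover.

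The hard part, then, is the surjectivity of $\GL(n,\cX) \to \PGL(n,\cX)$ as a morphism of sheaves of groups on $\cX$. Over an ordinary scheme this is the classical fact that every $\cO$-algebra automorphism of the matrix algebra $M_n(\cO)$ is, \emph{étale locally}, inner (the Skolem--Noether theorem in its sheaf-theoretic form: the obstruction to lifting a local section of $\PGL(n)$ to $\GL(n)$ is a class in $\HH^1(-,\GG_m) = \Pic$, which vanishes after refining the cover). I would invoke this component-by-component: for each open $(U,u)$ and each section of $\PGL(n,\cX)_{U,u}$ over some $V \to U$, after passing to an étale cover of $V$ the corresponding automorphism of $M_n(\cO_V)$ becomes inner and hence lifts to a section of $\GL(n,\cX)_{U,u}$. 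Because the topology on $\bS_{\acute{e}t}(\cX)$ is generated by coverings $\{(\phi_i,\Phi_i):(U_i,u_i)\to(U,u)\}$ whose underlying $\coprod \phi_i$ is surjective, such a local lift over a refinement is precisely what surjectivity of sheaves on $\bS_{\acute{e}t}(\cX)$ requires. The compatibility conditions $(i)$ and $(ii)$ of Definition \ref{def:SheafOnStack} guarantee that these componentwise exactness statements, which are the classical ones on each $\bS_{|U}$, assemble into exactness of \eqref{eqno1} as a sequence of sheaves of groups on the stack $\cX$.
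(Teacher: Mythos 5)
Your proof is correct and follows essentially the same route as the paper: reduce exactness to a componentwise check over each open $(U,u)$ of $\bS_{\acute{e}t}(\cX)$ and then invoke the classical exactness of $1 \to \GG_m \to \GL(n) \to \PGL(n) \to 1$ over a scheme. The only difference is that the paper simply cites \cite[IV, Prop.~2.3 and Cor.~2.4]{Milne80} for that classical fact (noting it already holds Zariski-locally), whereas you sketch its proof (scalar matrices, centralizer of $M_n$, Skolem--Noether with the $\Pic$ obstruction) --- which is a correct expansion of the same argument.
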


\begin{proof} 
It is enough to show that
for any \'etale open $(U,u)$ of $\cX$, the restriction to the  \'etale site of $U$ of the sequence 
$1 \rightarrow \GG_{m_{U,u}}  \rightarrow  \GL(n)_{U,u} \rightarrow  \PGL(n)_{U,u} \rightarrow 1$  is exact and this follows by  \cite[IV, Prop. 2.3. and Cor 2.4.]{Milne80}.
\end{proof}

Let $\Lf(\cX)$ be the $S$-stack on $\cX$ of locally free $\cO_\cX$-modules. Let $\cA$ be an Azumaya algebra over $\cX$. Consider 
 the morphism of $S$-stacks on $\cX$
\begin{equation}\label{eqno2}
\End: \Lf(\cX) \longrightarrow \Az(\cX), \;\;\;\; \cL \longmapsto \uEnd_{\cO_\cX}(\cL)
\end{equation}
Following \cite[Chp IV 2.5]{Giraud}, let $\delta(\cA)$ be the fibered category over $\bS_{\acute {e}t}$
 of trivializations of $\cA$ defined in the following way:
\begin{itemize}
	\item  for any $U \in \Ob(\bS_{\acute {e}t})$, the objects of $\delta(\cA)(U)$ are trivalizations of $\cA _{|U},$ i.e. pairs 
$(\cL,a)$ with $\cL \in \Ob(\Lf(\cX)(U))$ and $ a \in \uIsom_U \big(\uEnd_{\cO_\cX}(\cL),\cA_{|U}\big)$,
\item for any arrow $f: V \to U$ of $\bS_{\acute {e}t}$, the arrows of $\delta(\cA)$ over $f$ with source $(\cL',a')$ and target $(\cL,a)$ are arrows
$\varphi : \cL ' \rightarrow \cL $ of $\Lf(\cX)$ over $f$  such that $\Az(\cX)(f) \circ a'= a \circ \End(\varphi),$ with $\Az(\cX)(f):\cA_{|V} \to \cA_{|U}.$
\end{itemize}
Since $\Lf(\cX)$ and $\Az(\cX)$ are $S$-stacks on $\cX, $  $\delta(\cA)$ is also an $S$-stack on $\cX$ (see \cite[Chp IV Prop 2.5.4 (i)]{Giraud}). Observe that the morphism of $S$-stacks $\End: \Lf(\cX) \rightarrow \Az(\cX)$ is locally surjective on objects by definition of Azumaya algebra. Moreover, it is locally surjective on arrows by exactness of the sequence (\ref{eqno1}). Therefore as in \cite[Chp IV Prop 2.5.4 (ii)]{Giraud}, $\delta(A)$ is a gerbe over $\cX$, called the \textbf{gerbe of trivializations of $\cA$}.
For any object $(U,u)$ of $\bS_{\acute {e}t} (\cX)$ the morphism of sheaves of groups on $U$
\[
(\GG_{m,{\cX}})_{U,u} = (\cO_{\cX}^*)_{U,u} \longrightarrow \big(\uAut(\cL,a)\big)_{U,u},
\]
that sends a section $g$ of $(\cO ^* _{\cX})_{U,u}$ to the multiplication $ g \cdot -:(\cL,a) _{U,u} \rightarrow (\cL,a) _{U,u}$ by this section, is an isomorphism. This means that the gerbe $\delta(\cA)$ is in fact a $\GG_{m,{\cX}}$-gerbe. By Corollary \ref{thm:H^2} we can then associate to any Azumaya algebra $\cA$ over $\cX$ a cohomological class in $\HH^2_{\acute {e}t}(\cX,\GG_{m, {\cX}})$, denoted by $\overline{\delta(\cA)}$, which is given by the $\GG_{m,{\cX}}$-equivalence class of $\delta (\cA)$ in $\bGerbe^2_S (\GG_{m, {\cX}})$.

\begin{proposition}\label{azutriv}
An Azumaya algebra $\cA$ over $\cX$ is trivial if and only if its cohomological class $\overline{\delta(\cA)} $ in $\HH^2_{\acute {e}t}(\cX,\GG_{m, {\cX}})$ is zero.
\end{proposition}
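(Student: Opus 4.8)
The plan is to prove both directions of the equivalence by unwinding the definition of the gerbe of trivializations $\delta(\cA)$ and using the identification of its class in $\bGerbe^2_S(\GG_{m,\cX}) \cong \HH^2_{\acute{e}t}(\cX,\GG_{m,\cX})$ provided by Corollary \ref{thm:H^2}. Recall that a $\GG_{m,\cX}$-gerbe represents the neutral (zero) class in $\bGerbe^2_S(\GG_{m,\cX})$ precisely when it is $\GG_{m,\cX}$-equivalent to the neutral gerbe $\cTors(\GG_{m,\cX})$ of $\GG_{m,\cX}$-torsors, equivalently when it admits a global object (a global section over $\cX$). So the whole statement reduces to: \emph{$\cA$ is trivial if and only if the gerbe $\delta(\cA)$ has a global object.}

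First I would prove the easy direction. Suppose $\cA$ is trivial, so there exists a trivialization $(\cL,a)$ with $\cL$ a locally free $\cO_\cX$-module and $a\colon \uEnd_{\cO_\cX}(\cL) \to \cA$ an isomorphism of $\cO_\cX$-algebras. By the very definition of $\delta(\cA)$, the pair $(\cL,a)$ is a global object of $\delta(\cA)$ over $\cX$, i.e. an object of $\delta(\cA)(\cX)$. A $\GG_{m,\cX}$-gerbe that admits a global object is neutral: the global object determines an equivalence of the gerbe with $\cTors(\GG_{m,\cX})$ (this is exactly the content of Breen's local neutralization $\Psi_x$ of (\ref{eq:datagerbe}) applied globally, or directly Giraud's characterization of neutral gerbes as those with a global section). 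Hence $\overline{\delta(\cA)}$ is the neutral element of $\bGerbe^2_S(\GG_{m,\cX})$, which under Corollary \ref{thm:H^2} is $0 \in \HH^2_{\acute{e}t}(\cX,\GG_{m,\cX})$.

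Conversely, suppose $\overline{\delta(\cA)} = 0$. Then $\delta(\cA)$ is $\GG_{m,\cX}$-equivalent to the neutral gerbe, so it is a neutral gerbe and therefore admits a global object over $\cX$. Such a global object is by construction a trivialization $(\cL,a)$ of $\cA$, and its existence means precisely that $\cA$ is trivial. The one point requiring care is that the vanishing of the cohomology class yields not merely a local object but an \emph{actual} global section of the stack $\delta(\cA)$; this is guaranteed because the bijection of Corollary \ref{thm:H^2} between $\bGerbe^2_S(\GG_{m,\cX})$ and $\HH^2(\cX,\GG_{m,\cX})$ sends the neutral gerbe $\cTors(\GG_{m,\cX})$ to $0$, and the neutral gerbe is characterized among $\GG_{m,\cX}$-gerbes by possessing a global object.

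The main obstacle is verifying rigorously that ``$\delta(\cA)$ neutral $\Rightarrow$ $\delta(\cA)$ has a global object over $\cX$'' in the stacky setting, where $\cX$ is not assumed algebraic. In the classical scheme case this is standard (\cite[Chp IV 2.5]{Giraud}), but here one must check it over the associated site $\bS_{\acute{e}t}(\cX)$ of the stack. The cleanest route is to note that the whole formalism of gerbes and their neutralizations has been set up over the site $\bS(\cX)$ exactly so that Giraud's theory applies verbatim: a $\cF$-gerbe over a site is neutral if and only if it has a global section, and neutrality is equivalent to representing $0 \in \HH^2$. Thus I would simply invoke the site-theoretic version of \cite[Chp IV Prop 2.5.4]{Giraud} applied to $\bS_{\acute{e}t}(\cX)$, together with Corollary \ref{thm:H^2}, to obtain the global trivialization, and conclude that $\cA$ is trivial.
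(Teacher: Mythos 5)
Your proposal is correct and follows essentially the same route as the paper: the paper's proof is precisely the one-line chain ``$\cA$ is trivial iff the gerbe $\delta(\cA)$ admits a global section iff its class $\overline{\delta(\cA)}$ is zero in $\HH^2_{\acute {e}t}(\cX,\GG_{m,\cX})$,'' which you have simply unpacked with the same ingredients (the definition of $\delta(\cA)$, Giraud's characterization of neutral gerbes as those with a global object, and Corollary \ref{thm:H^2}).
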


\begin{proof} The Azumaya algebra $\cA$ is trivial if and only if the gerbe $\delta (\cA)$ admits a global section if and only if its corresponding class $\overline{\delta(\cA)}$ is zero in $\HH^2_{\acute {e}t}(\cX,\GG_{m, {\cX}})$. 
\end{proof}

\begin{theorem}\label{thm:BR(X)inj}
The morphism 
\begin{eqnarray}\label{eq:delta}
\nonumber \delta: \Br(\cX) &\longrightarrow & \HH^2_{\acute {e}t}(\cX,\GG_{m, {\cX}}) \\
\nonumber  [ \cA ] & \longmapsto & \overline{\delta(\cA)}    
\end{eqnarray}
 is an injective group homomorphism.
\end{theorem}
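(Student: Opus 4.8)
The plan is to establish the three properties in turn: that $\delta$ is well defined on similarity classes, that it is a group homomorphism, and that it is injective. Injectivity is essentially already available: by Proposition \ref{azutriv} an Azumaya algebra $\cA$ is trivial precisely when $\overline{\delta(\cA)}=0$ in $\HH^2_{\acute{e}t}(\cX,\GG_{m,\cX})$, and a trivial Azumaya algebra represents the neutral element $[\uEnd_{\cO_\cX}(\cL)]$ of $\Br(\cX)$. Hence, once $\delta$ is known to be a homomorphism, its kernel consists exactly of the classes $[\cA]$ with $\cA$ trivial, that is, of the identity of $\Br(\cX)$, so $\delta$ is injective. The real work is therefore the homomorphism property, and (logically prior to it) well-definedness, both of which I would derive from a single geometric statement.

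That statement is the existence of a canonical equivalence of $\GG_{m,\cX}$-gerbes $\delta(\cA \otimes_{\cO_\cX} \cA') \cong \delta(\cA)\wedge^{\GG_{m,\cX}}\delta(\cA')$. First I would build the comparison functor on the level of trivializations: over an \'etale open $(U,u)$ of $\cX$, a pair consisting of a trivialization $(\cL,a)$ of $\cA_{|U}$ and a trivialization $(\cL',a')$ of $\cA'_{|U}$ is sent to the trivialization $(\cL\otimes_{\cO_\cX}\cL',b)$ of $(\cA\otimes_{\cO_\cX}\cA')_{|U}$, where $b$ is the composite
\[
\uEnd_{\cO_\cX}(\cL\otimes_{\cO_\cX}\cL')\;\cong\;\uEnd_{\cO_\cX}(\cL)\otimes_{\cO_\cX}\uEnd_{\cO_\cX}(\cL')\;\xrightarrow{\,a\otimes a'\,}\;\cA\otimes_{\cO_\cX}\cA'
\]
of the canonical isomorphism already used in Section \ref{brauergroup} to define the group law on $\Br(\cX)$ with $a\otimes a'$. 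This defines a morphism of $S$-stacks $\delta(\cA)\times\delta(\cA')\to\delta(\cA\otimes_{\cO_\cX}\cA')$. The key verification is that it is compatible with the $\GG_{m,\cX}$-band structure: scaling $\cL$ by a section $\lambda$ of $\GG_{m,\cX}$ and scaling $\cL'$ by $\lambda$ induce the same scaling by $\lambda$ on $\cL\otimes_{\cO_\cX}\cL'$, which is exactly the identification built into the contracted product $\wedge^{\GG_{m,\cX}}$. Hence the functor factors through the contracted product, and since it is, locally on $\bS_{\acute{e}t}(\cX)$, the standard equivalence of neutral gerbes $\cTors(\GG_{m,\cX})\times\cTors(\GG_{m,\cX})\to\cTors(\GG_{m,\cX})$, it is an equivalence of $\GG_{m,\cX}$-gerbes. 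Passing to $\GG_{m,\cX}$-equivalence classes in $\bGerbe^2_S(\GG_{m,\cX})\cong\HH^2_{\acute{e}t}(\cX,\GG_{m,\cX})$ (Corollary \ref{thm:H^2}), and recalling that the group law on $\bGerbe^2_S$ is the contracted product, this yields $\overline{\delta(\cA\otimes_{\cO_\cX}\cA')}=\overline{\delta(\cA)}+\overline{\delta(\cA')}$.

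With this additivity in hand, well-definedness follows. The algebra $\uEnd_{\cO_\cX}(\cE)$ is trivial, admitting the tautological trivialization $(\cE,\id)$, so $\overline{\delta(\uEnd_{\cO_\cX}(\cE))}=0$ by Proposition \ref{azutriv}. If $\cA$ and $\cA'$ are similar, say $\cA\otimes_{\cO_\cX}\uEnd_{\cO_\cX}(\cE)\cong\cA'\otimes_{\cO_\cX}\uEnd_{\cO_\cX}(\cE')$, then applying $\overline{\delta(-)}$ together with the additivity above gives $\overline{\delta(\cA)}=\overline{\delta(\cA')}$; thus $\delta$ descends to similarity classes, and the same additivity then expresses that it is a homomorphism $\Br(\cX)\to\HH^2_{\acute{e}t}(\cX,\GG_{m,\cX})$.

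I expect the main obstacle to be the careful construction and verification of the equivalence of $\GG_{m,\cX}$-gerbes, and in particular checking that the tensor-product functor respects the $\GG_{m,\cX}$-band identification and is locally an equivalence rather than merely a morphism. In the stacky framework this must be carried out on each open $(U,u)$ of $\bS_{\acute{e}t}(\cX)$ through the defining system $(\,\cdot\,_{U,u})$ and checked to be compatible with the restriction isomorphisms $\theta_{\phi,\Phi}$ of Definition \ref{def:SheafOnStack}; but, as in \cite[Chp IV 2.5]{Giraud}, this reduces fibrewise to the classical compatibility of the gerbe of trivializations with tensor products, so no genuinely new difficulty arises beyond the bookkeeping.
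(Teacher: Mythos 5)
Your proposal is correct and follows essentially the same route as the paper: both arguments rest on the single key identity $\overline{\delta(\cA\otimes_{\cO_\cX}\cA')}=\overline{\delta(\cA)}+\overline{\delta(\cA')}$, obtained by tensoring trivializations to produce a morphism of gerbes compatible with the group law of $\GG_{m,\cX}$ (the paper calls this a $+$-morphism), and then deduce well-definedness and injectivity from Proposition \ref{azutriv}. The only cosmetic difference is that the paper packages well-definedness and injectivity into one chain of equivalences via the opposite algebra $\cA^0$, whereas you treat them separately; the mathematical content is identical.
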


\begin{proof} 
Let $\cA, \cB$ be two Azumaya algebras over $\cX$. For any $U \in \Ob(\bS_{\acute {e}t})$, the morphism of gerbes
\begin{eqnarray}
\nonumber  \delta (\cA)(U) \times \delta(\cB)(U) &\longrightarrow &\delta(\cA \otimes_{\cO_\cX} \cB)(U) \\
\nonumber  ((\cL,a),(\cM,b)) & \longmapsto &(\cL \otimes_{\cO_\cX} \cM, a \otimes_{\cO_\cX} b )    
\end{eqnarray}
is a $+$-morphism, where $+: \GG_{m,{\cX}} \times \GG_{m,{\cX}} \to \GG_{m,{\cX}}$ is the group law underlying the sheaf $\GG_{m,{\cX}}$.
Therefore 
\begin{equation}\label{eq:delta2}
\overline{\delta(\cA)}+ \overline{\delta(\cB)}= \overline{\delta(\cA \otimes_{\cO_\cX} \cB)}
\end{equation} 
in $\HH^2_{\acute {e}t}(\cX,\GG_{m, {\cX}})$. This equality shows first that $\overline{\delta(\cA)} = -\overline{\delta(\cA^0)} $ and also that 
$$[\cA]=[\cB] \Leftrightarrow [\cA \otimes_{\cO_\cX} \cB^0]=0  \stackrel{\mathrm{Prop}\; \ref{azutriv}}{\Leftrightarrow}  \overline{\delta(\cA \otimes_{\cO_\cX} \cB^0)} =0  \stackrel{(\ref{eq:delta2})}{\Leftrightarrow} \overline{\delta(\cA)}+ \overline{\delta(\cB^0)}=0
\Leftrightarrow \overline{\delta(\cA)}= \overline{\delta(\cB)}
 $$
These equivalences prove that the morphism 
 $ \delta: \Br(\cX) \rightarrow  \HH^2_{\acute {e}t}(\cX,\GG_{m, {\cX}})$ is well-defined and injective.
 Finally always from the equality (\ref{eq:delta2}) we get that $\delta$ is a group homomorphism.
\end{proof}

\section{Gerbes and Azumaya algebras over 1-motives}\label{geometricalcase}

Let $M=[u:X \to G]$ be a 1-motive defined over a scheme $S$, denote by $\cM$ its associated Picard $S$-stack under the equivalence constructed in \cite[Expos\'e XVIII, Prop 1.4.15]{SGA4D} and denote by $\bS(\cM)$ the site associated to the stack $\cM$ as in Definition \ref{defsito}.

\begin{definition}
\begin{enumerate}
	\item The $S$-stack of Azumaya algebras over the 1-motive $M$ is the  $S$-stack of Azumaya algebras $ \Az(\cM) $ over $\cM$.
	\item The Brauer group of the 1-motive $M$ is the Brauer group $\Br(\cM)$ of $\cM$.
      \item A $\GG_m$-gerbe on the 1-motive $M$ is a
 $\GG_{m, \cM}$-gerbe on $\cM$ (i.e. a $\GG_{m, \cM}$-gerbe on the site $\bS(\cM)$).
\end{enumerate}
\end{definition}

 By \cite[(3.4.3)]{LaumonMoretB} the associated Picard $S$-stack $\cM$ is 
isomorphic to the quotient stack $[G/X]$ (where $X$ acts on~$G$ via the given 
morphism $u : X\to G$). Note that in general it is not 
algebraic in the sense of \cite{LaumonMoretB} because it is not quasi-separated. However 
the quotient map 
\[\iota : G \longrightarrow [G/X]\cong \cM \]
 is representable, \'etale and surjective. The fiber product $G\times_{[G/X]}G$ is 
isomorphic to $X\times_S G$. Via this identification, the projections 
$q_i:G\times_{[G/X]}G \to G $ (for $i=1,2$)
correspond respectively to the second projection $p_2: X\times_S G \to G $ 
and to the map $\mu: X\times_S G \to G$ given by the action $(x,g)\mapsto 
u(x)g$. We can further identify the fiber 
product $G\times_{[G/X]}G\times_{[G/X]}G$ with $X\times_S X\times_S G$ and the 
partial projections $q_{13}, q_{23}, q_{12}: G\times_{[G/X]}G\times_{[G/X]}G 
\to G\times_{[G/X]}G$ 
respectively with the map $m_X\times \id_G :  X\times_S X \times_S G \to 
X\times_S G$ where $m_X$ denotes the group law of $X$, the map $\id_X\times \mu 
: X\times_S X \times_S G \to X\times_S 
G$, and the partial projection $p_{23}: X\times_S X \times_S G \to X\times_S 
G$. The effectiveness of the descent of Azumaya algebras with respect to the quotient map $\iota : G\to [G/X]$ is proved in the following Lemma (see 
\cite[(9.3.4)]{Olsson16} for the definition of pull-back of $\cO_\cM$-algebras):

\begin{lemma}\label{lem:DescentAzumaya}
The pull-back $\iota^*:\Az(\cM) \to \Az(G)$ is an equivalence of $S$-stacks between the $S$-stack of Azumaya algebras on $\cM$ and the $S$-stack of $X$-equivariant Azumaya algebras on $G$. More precisely,
 to have an Azumaya algebra $\cA$ on $\cM$ is equivalent 
to have a pair 
\[(A,\varphi)\]
 where $A$ is an Azumaya algebra on $G$ and 
$\varphi : p_2^* A \rightarrow \mu^*A$ is an isomorphism of Azumaya algebras on~$X\times_S G$
that satisfies (up to canonical isomorphisms) the cocycle condition
\begin{equation} \label{eq:descentdataAzumaya}
(m_X\times id_G)^* \varphi=\big((\id_X\times
\mu)^*\varphi\big) \circ \big((p_{23})^* \varphi\big).
\end{equation}
Under this equivalence, the pull-back $\iota^*:\Az(\cM) \to \Az(G)$ is the morphism of stacks which forgets the descent datum: $\iota^*(A,\varphi)=A. $
\end{lemma}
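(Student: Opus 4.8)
The plan is to recognize this statement as an instance of faithfully flat (indeed étale) descent along the covering $\iota : G \to \cM$. As recalled just before the statement, $\iota$ is representable, étale and surjective, so the one-element family $\{\iota : G \to \cM\}$ is a covering in $\bS_{\acute{e}t}(\cM)$ and in particular $\iota$ is faithfully flat. Crucially, because $\iota$ is representable, the fiber products $G\times_{\cM}G$ and $G\times_{\cM}G\times_{\cM}G$ are \emph{schemes}; thus the descent takes place along an honest étale cover of $\cM$ by schemes, and the non-algebraicity of $\cM$ never intervenes. The presenting groupoid is the action groupoid $X\times_S G \rightrightarrows G$ with source and target $p_2$ and $\mu$, so a descent datum for $\iota$ is by definition an $X$-equivariant structure, which is exactly the terminology used in the statement.

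First I would check that descent is effective for Azumaya algebras along $\iota$; this is the substantive point, and it amounts to the fact that $\Az(-)$ is a stack for the étale topology. An Azumaya algebra is an $\cO$-algebra of finite presentation which is, étale-locally, a matrix algebra. Quasi-coherent modules satisfy étale (and fppf) descent, and the algebra structure as well as morphisms of algebras descend along with them; the two defining local conditions (finite presentation as an $\cO$-module, and being locally a matrix algebra) are themselves étale-local and hence survive descent. Granting this, stack-theoretic descent for the cover $\iota$ yields an equivalence of $S$-stacks identifying an Azumaya algebra $\cA$ on $\cM$ with the datum of an Azumaya algebra $A=\iota^*\cA$ on $G$ together with an isomorphism $\varphi : q_1^* A \to q_2^* A$ on $G\times_{\cM}G$ satisfying the cocycle condition $q_{13}^*\varphi = q_{23}^*\varphi \circ q_{12}^*\varphi$ on $G\times_{\cM}G\times_{\cM}G$. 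Since descent is an equivalence of categories functorially in the base, this is an equivalence of the full $S$-stacks, not merely a bijection on objects.

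It then remains to rewrite this abstract descent datum through the explicit identifications recorded before the statement. Under $G\times_{\cM}G \cong X\times_S G$ the projections $q_1,q_2$ become $p_2$ and $\mu$, so $\varphi$ is an isomorphism $p_2^* A \to \mu^* A$ on $X\times_S G$, which is precisely the $X$-equivariance datum. Under $G\times_{\cM}G\times_{\cM}G \cong X\times_S X\times_S G$ the partial projections $q_{13},q_{23},q_{12}$ become $m_X\times\id_G$, $\id_X\times\mu$ and $p_{23}$, so the cocycle condition $q_{13}^*\varphi = q_{23}^*\varphi \circ q_{12}^*\varphi$ turns into exactly the equation (\ref{eq:descentdataAzumaya}). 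Finally, $\iota^*(A,\varphi)=A$ holds by construction, since pulling back along $\iota$ forgets the descent datum. The main obstacle is the verification that $\Az$ descends along $\iota$ (equivalently, that it is an étale stack); once this is granted, the remainder is a mechanical translation using the given identifications of fiber products and partial projections, with no further computation required.
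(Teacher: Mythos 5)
Your proposal is correct and follows essentially the same route as the paper: both arguments reduce the statement to the effectivity of \'etale descent for quasi-coherent modules, for their algebra structures, and for the Azumaya property over \emph{schemes}, using the representability of $\iota$ to stay in the world of schemes throughout. The only cosmetic difference is that the paper organizes the reduction by base-changing the cover to each object $(U,x)$ of the site $\bS_{\acute{e}t}(\cM)$ (since an Azumaya algebra on $\cM$ is by definition a compatible system over such objects), whereas you work globally with the \v{C}ech groupoid $X\times_S G\rightrightarrows G$; the underlying descent inputs are identical.
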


\begin{proof} Since the assertion is local for the topology on $\bS_{\acute {e}t}(\cM) ,$ it suffices to prove it for any open $(U,u) $ of $\cM$, where $U$ is an object of $\bS_{\acute {e}t}$ and $x$ is an objecy of $\cM(U)$. The descent of quasi-coherent modules is known for the morphism $\iota_U : G \times_{\iota,\cM,x} U \to U$ obtained by base 
change (see \cite[Thm (13.5.5)]{LaumonMoretB}). The additional algebra structure descends by \cite[II Thm 3.4]{KO74}. Finally the Azumaya algebra structure descends by \cite[III, Prop 2.8]{J14}. 
Since an Azumaya algebra on $\cM$ is by definition a collection of Azumaya algebras on the various schemes $U$, the general case follows. 
\end{proof}

Now we prove also the effectiveness of the 2-descent of $\GG_m$-gerbes under the quotient map $\iota : G \to \cM ,$ using the result of Section \ref{descofgerbes}.

\begin{lemma}\label{lem:DescentGerbes}
To have a $\GG_{m, \cM}$-gerbe $\cG$ on $\cM$ is equivalent to have a triplet 
$$(\cG',\varphi, \gamma)$$
where $\cG'$ is a $\GG_{m, G}$-gerbe on $G$  and $(\varphi, \gamma)$ are 2-descent data on $\cG'$ with respect to $\iota : G\to [G/X]$, that is 
\begin{itemize}
	
	\item $\varphi: p_2^* \cG' \rightarrow \mu^* \cG'$ is an equivalence of gerbes on~$X\times_S G$, 
	
	\item $\gamma: \big((\id_X\times
\mu)^*\varphi\big) \circ \big((p_{23})^* \varphi\big) \Rightarrow (m_X\times id_G)^* \varphi$ is a natural isomorphism on $X\times_S X \times_S G  \cong G\times_{[G/X]}G\times_{[G/X]}G $, 
\end{itemize}
which satisfies the compatibility condition
\begin{equation} \label{eq:descentdataGerbe}
p_{134}^* \gamma \circ [p_{34}^* \varphi * p_{123}^* \gamma ] =
  p_{124}^* \gamma \circ [p_{234}^* \gamma * p_{12}^* \varphi ]
\end{equation}
when pulled back to 
$ X\times_S X\times_S X \times_S G  \cong G\times_{[G/X]} G\times_{[G/X]}G\times_{[G/X]}G :=G^4$ (here $pr_{ijk}: G^4 \to G\times_{[G/X]} G\times_{[G/X]}G$ and $pr_{ij}: G^4 \to  G\times_{[G/X]} G$ are the partial projections).
\end{lemma}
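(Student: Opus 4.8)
The plan is to apply the general effectiveness of 2-descent for $\GG_m$-gerbes, Theorem \ref{thm:DescentGerbesOverSchemes}, to the covering $\iota : G \to \cM$, and then to rewrite the resulting abstract 2-descent data explicitly by means of the groupoid presentation of $\cM \cong [G/X]$.

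First I would record that $\iota : G \to \cM$ is representable, \'etale and surjective, hence it constitutes a covering for the \'etale topology on $\cM$: for every open $(U,x)$ of $\bS_{\acute {e}t}(\cM)$ the base change $G\times_{\iota,\cM,x}U \to U$ is an \'etale surjective, in particular faithfully flat and locally of finite presentation, morphism of schemes. The crucial feature, already isolated in the paragraph preceding Lemma \ref{lem:DescentAzumaya}, is that every iterated self-fiber product of $\iota$ is \emph{representable}: one has canonical isomorphisms
\[
G\times_\cM G \cong X\times_S G, \qquad G\times_\cM G\times_\cM G \cong X\times_S X\times_S G,
\]
together with $G^4 \cong X\times_S X\times_S X\times_S G$, under which the two projections $q_1,q_2$ become $p_2$ and $\mu$, and the three partial projections $q_{12},q_{23},q_{13}$ become $p_{23}$, $\id_X\times\mu$ and $m_X\times\id_G$. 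Thus the groupoid $X\times_S G \rightrightarrows G$ that controls descent along $\iota$ is a groupoid of schemes, even though $\cM$ itself is not algebraic.

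Next, since the fibered 2-category of $\GG_m$-gerbes is a 2-stack (Lemma \ref{lem:Gerbe2-stack}, applied over the site $\bS_{\acute {e}t}(\cM)$ as in Corollary \ref{thm:H^2}), 2-descent of $\GG_m$-gerbes is effective for objects, and in particular along the covering $\iota$. Running the proof of Theorem \ref{thm:DescentGerbesOverSchemes} verbatim, now with $\iota : G \to \cM$ in place of $p : S' \to S$, shows that to give a $\GG_{m,\cM}$-gerbe $\cG$ on $\cM$ is equivalent to giving a $\GG_{m,G}$-gerbe $\cG'$ on $G$ together with abstract 2-descent data: an equivalence of gerbes $\varphi : q_1^*\cG' \to q_2^*\cG'$ on $G\times_\cM G$, a natural isomorphism $\gamma : q_{23}^*\varphi \circ q_{12}^*\varphi \Rightarrow q_{13}^*\varphi$ on $G\times_\cM G\times_\cM G$, and the coherence condition (\ref{eq:descentgerbe-2arrows}) on $G^4$. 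The only point that is not purely formal is that this effectiveness argument, which rests solely on the 2-stack property, be legitimate over the stacky base $\cM$; this is guaranteed by the sheaf- and stack-theoretic formalism over $\bS(\cM)$ set up in Section \ref{recall} together with the representability of $\iota$ just noted.

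Finally I would substitute the explicit identifications above into the abstract descent data. Under $G\times_\cM G \cong X\times_S G$ the equivalence $\varphi$ becomes $\varphi : p_2^*\cG' \to \mu^*\cG'$; under $G\times_\cM G\times_\cM G \cong X\times_S X\times_S G$ the natural isomorphism $\gamma$ acquires the source and target stated in the Lemma, namely $\gamma : \big((\id_X\times\mu)^*\varphi\big)\circ\big((p_{23})^*\varphi\big)\Rightarrow (m_X\times\id_G)^*\varphi$; and the coherence condition on $G^4 \cong X\times_S X\times_S X\times_S G$ becomes exactly (\ref{eq:descentdataGerbe}). The assertion $\iota^*(\cG',\varphi,\gamma)=\cG'$ is then immediate, since the construction of $\cG$ from $(\cG',\varphi,\gamma)$ recovers $\cG'$ upon pulling back along $\iota$. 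I expect the main obstacle to be the 2-categorical bookkeeping of this last step: one must transport the 2-cocycle $\gamma$ and its coherence pentagon through the four identifications, keeping careful track of the several partial projections on $G^4$, and check that (\ref{eq:descentgerbe-2arrows}) is carried precisely to (\ref{eq:descentdataGerbe}).
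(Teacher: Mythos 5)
Your proposal is correct and follows essentially the same route as the paper: the paper's proof likewise reduces to Theorem \ref{thm:DescentGerbesOverSchemes} via the representability of $\iota$, phrasing the reduction pointwise (a $\GG_{m,\cM}$-gerbe is a collection of gerbes over the opens $(U,x)$ of $\bS_{\acute{e}t}(\cM)$, and for each such open the base change $\iota_U : G\times_{\iota,\cM,x}U \to U$ is a morphism of schemes to which the theorem applies directly). Your additional identification of the iterated fiber products with $X\times_S G$, $X\times_S X\times_S G$, etc.\ is exactly the translation already set up in the paragraph preceding Lemma \ref{lem:DescentAzumaya}, so nothing essential differs.
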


\begin{proof} 
A $\GG_{m, \cM}$-gerbe on $\cM$ is by definition a collection of $\GG_{m,U}$-gerbes over the various 
objects $U$ of $\bS$. Hence it is enough to prove that for any object $U$ of $\bS$ and any object $x$ of $\cM(U)$, the 2-descent of $\GG_{m}$-gerbes with respect to the morphism $\iota_U : G \times_{\iota,\cM,x} U \to U$ obtained by base 
change is effective. But this is a consequence of Theorem \ref{thm:DescentGerbesOverSchemes}.
\end{proof}


  \section{The generalized Theorem of the Cube for 1-motives and its consequences}\label{proofGTC}

We use the same notation of the previous Section.
We denote by $\cM^3 = \cM \times_{\bS} \cM \times_{\bS} \cM $ (resp. $\cM^2 = \cM \times_{\bS} \cM$) the fibered product of 3 (resp. 2) copies of $\cM.$ Since any Picard stack admits a global neutral object, it exists a unit section denoted by $\epsilon: \bS \rightarrow \cM$. 
Consider the map
  \[ s_{ij}:=   \cM \times_{\bS} \cM  \to \cM \times_{\bS} \cM \times_{\bS} \cM  \] 
which inserts the unit section $\epsilon: {\bS} \rightarrow \cM$ into the $k$-th factor for $k \in \{1,2,3 \}-\{i,j\} .$ If $\ell$ is a prime number and $\HH$ is an abelian group, $\HH(\ell)$ denotes the $\ell$-primary component of $\HH$.

\begin{definition}\label{def-thmCube}
Let $M$ be a 1-motive defined over a scheme $S$. Let $\ell$ be a prime number distinct from the residue characteristics of $S$.
The 1-motive $M$ \textbf{satisfies the generalized Theorem of the Cube for the prime $\ell$} if the natural homomorphism 
\begin{equation}\label{cube}
\begin{matrix}
\prod s_{ij}^*:\HH_{\acute {e}t}^2(\cM^3,\GG_{m, \cM^3})(\ell) & \longrightarrow &  
\big( \HH_{\acute {e}t}^2(\cM^2 ,\GG_{m, \cM^2})(\ell) \big)^3\\
x &  \longmapsto & (s_{12}^* (x), s_{13}^* (x),s_{23}^* (x))
\end{matrix}
\end{equation}
 is injective.
\end{definition}

\subsection{Its consequences}
\begin{proposition}\label{prop:Isogeny-thmCube}
Let $M$ be a 1-motive satisfying the generalized Theorem of the Cube for a prime $\ell$ distinct from the residue characteristics of $S$. Let $N: \cM \rightarrow \cM$ be the multiplication by $N$ on the Picard $S$-stack $\cM$. Then for any $y \in \HH_{\acute {e}t}^2(\cM,\GG_{m, \cM})(\ell) $ we have that 
\begin{equation}\label{N*}
 N^*(y)=N^2y+\Big(\frac{N^2-N}{2}\Big)\big((-id_\cM)^*(y)-y\big).
\end{equation}
\end{proposition}

\begin{proof}
First we prove that given three contravariant functors $F,G,H : \cP \rightarrow \cM$,
we have the following equality for any $y$ in $\HH^2_{\acute {e}t}(\cM,\GG_{m, \cM})(\ell)$
\begin{equation}\label{eq:f+g+h}
(F+G+H)^*(y)-(F+G)^*(y)-(F+H)^*(y)-(G+H)^*(y)+F^*(y)+G^*(y)+H^*(y)=0.
\end{equation}
Let $pr_i : \cM \times \cM \times \cM \rightarrow \cM $ the projection onto the $i^{th}$ factor. Put $m_{i,j}=pr_i \otimes pr_j : \cM \times \cM \times \cM \rightarrow \cM $ and $m=pr_1 \otimes pr_2 \otimes pr_3 : \cM \times \cM \times \cM \rightarrow \cM ,$ where $\otimes$ is the law group of the Picard $S$-stack $\cM$.
The element
\[
z=m^*(y)-m_{1,2}^*(y)- m_{1,3}^*(y) -m_{2,3}^*(y)+pr_1^*(y)+pr_2^*(y)+pr_3^*(y)
\]
of $\HH_{\acute {e}t}^2(\cM^3,\GG_{m, \cM ^3})(\ell)$ is zero when restricted to $S \times \cM \times \cM ,\,  \cM \times S \times \cM$ and $\cM \times \cM \times S$ (this restriction is obtained inserting the unit section $\epsilon: S \rightarrow \cM$). Thus it is zero in  $\HH_{\acute {e}t}^2(\cM^3,\GG_{m, \cM^3})(\ell)$ by the generalized Theorem of the Cube for $\ell$. Finally, pulling back $z$ by $(F,G,H): \cP \to \cM \times \cM \times \cM $ we get (\ref{eq:f+g+h}). 

Now, setting $F=N,\,G=id _\cM, \, h=(-id_\cM)$ we get 
\[
N^*(y)=(N+id _\cM)^*(y)+(N-id _\cM)^*(y)+0^*(y)- N^*(y)-(id _\cM)^*(y)- (-id _\cM)^*(y).
\]
We rewrite this as
\[
(N+id _\cM)^*(y)- N^*(y)=N^*(y) -(N-id _\cM)^*(y)+(id _\cM)^*(y)+ (-id _\cM)^*(y).
\]
If we denote $z_1=y$ and $z_N=N^*(y)-(N-id _\cM)^*(y)$, we obtain $z_{N+1}=z_N + y+ (-id _\cM)^*(y)$.
By induction, we get $z_N=y+(N-id _\cM)(y + (-id _\cM)^*(y))$. From the equality $N^*(y)=z_N + (N-id _\cM)^*(y)$ we have
\[
N^*(y)= z_N + z_{N-1} + \dots +z_1 . 
\]
and therefore we are done.
\end{proof}

\begin{corollary}\label{cor-thmCube}
Let $M$ be a 1-motive satisfying the generalized Theorem of the Cube for a prime $\ell$. Then, if $\ell \not=2$, the $\ell^n$-torsion elements of
$
\HH_{\acute {e}t}^2(\cM,\GG_{m, \cM})
$
are contained in

$$ \ker\big[ (\ell_\cM^n)^*: 
\HH_{\acute{e}t}^2(\cM,\GG_{m, \cM})  \longrightarrow \HH_{\acute {e}t}^2(\cM,\GG_{m, \cM}) \big] $$
and if $\ell =2$, they are contained in

$$ \ker\big[ (2_\cM^{n+1})^*: 
\HH_{\acute{e}t}^2(\cM,\GG_{m, \cM})  \longrightarrow \HH_{\acute {e}t}^2(\cM,\GG_{m, \cM}) \big]. $$
\end{corollary}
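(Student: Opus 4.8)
The plan is to deduce the corollary directly from the isogeny formula of Proposition \ref{prop:Isogeny-thmCube} by feeding in a suitable multiplication map $N$ and then tracking divisibility by the relevant power of $\ell$. The only two facts I would use beyond that formula are that an element killed by $\ell^n$ automatically lies in the $\ell$-primary component, so that the proposition applies, and that pull-back along a morphism of Picard stacks is additive on $\HH^2_{\acute{e}t}$.

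First I would fix $y \in {}_{\ell^n}\HH^2_{\acute{e}t}(\cM,\GG_{m,\cM})$, so that $\ell^n y = 0$; in particular $y \in \HH^2_{\acute{e}t}(\cM,\GG_{m,\cM})(\ell)$ and Proposition \ref{prop:Isogeny-thmCube} is available. Setting $w := (-\id_\cM)^*(y) - y$, additivity of $(-\id_\cM)^*$ gives $\ell^n w = (-\id_\cM)^*(\ell^n y) - \ell^n y = 0$, so $w$ is also killed by $\ell^n$. This common annihilation is what makes the coefficient bookkeeping work.

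For $\ell \neq 2$ I would apply the formula with $N = \ell^n$, obtaining
$$(\ell^n_\cM)^*(y) = (\ell^n)^2\, y + \frac{\ell^n(\ell^n-1)}{2}\, w.$$
The first summand is $\ell^n\cdot(\ell^n y) = 0$. For the second, since $\ell$ is odd the integer $\tfrac{\ell^n-1}{2}$ is well defined, so the coefficient equals $\ell^n \cdot \tfrac{\ell^n-1}{2}$, a multiple of $\ell^n$; as $\ell^n w = 0$, this term vanishes and $(\ell^n_\cM)^*(y)=0$, as claimed. For $\ell = 2$ the same choice $N = 2^n$ would only make the coefficient divisible by $2^{n-1}$, which is not enough, so I would instead take $N = 2^{n+1}$, giving
$$(2^{n+1}_\cM)^*(y) = (2^{n+1})^2\, y + 2^n(2^{n+1}-1)\, w.$$
Here $(2^{n+1})^2 y = 2^{2n+2} y = 0$ because $2^n y = 0$, and the coefficient $2^n(2^{n+1}-1)$ of $w$ is a multiple of $2^n$, so that summand vanishes too; hence $(2^{n+1}_\cM)^*(y) = 0$.

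The main (and essentially only) obstacle is the $2$-adic valuation of the binomial coefficient $\tfrac{N^2-N}{2}$: for odd $\ell$ the factor $N=\ell^n$ already carries the full power $\ell^n$ needed to annihilate $w$, but at $\ell=2$ the division by $2$ consumes one factor of $2$, which is exactly what forces the upward shift from $2^n$ to $2^{n+1}$ and explains the asymmetry between the two cases in the statement.
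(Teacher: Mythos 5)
Your proof is correct and is precisely the deduction the paper intends: the corollary is stated without proof as an immediate consequence of Proposition \ref{prop:Isogeny-thmCube}, and your computation with $N=\ell^n$ (resp.\ $N=2^{n+1}$), using that both $y$ and $w=(-\id_\cM)^*(y)-y$ are killed by $\ell^n$, is exactly the expected argument. Your closing remark about the $2$-adic valuation of $\tfrac{N^2-N}{2}$ also correctly explains the asymmetry between the two cases.
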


\begin{proof}
The result follows by (\ref{N*}).
\end{proof}

\subsection{Its proof}
We finish this section by searching the hypothesis we should put on the base scheme $S$ in order to have that the 1-motive $M=[X \stackrel{u}{\to} G]$ satisfies the generalized Theorem of the Cube.
From now on we will switch freely between the two equivalent notion of invertible sheaf $\mathcal{L}$ on the extension $G$ and $\GG_m$-torsor $\uIsom(\mathcal{O}_G,\mathcal{L})$ on $G$.
The extension $G$ fits into the following short exact sequence

\[0 \longrightarrow T \longrightarrow	G \stackrel{\pi}{\longrightarrow}  
A \longrightarrow 0 
\]
The pull-back of gerbes defined in (\ref{def:pull-back}) allows us to define an homomorphism of abelian groups $\pi^*:\bGerbe_S^2 (\GG_{m, A})  \to \bGerbe_S^2 (\GG_{m, G}) .$

\begin{proposition}\label{cor:IsoOnTorsionG-A}
Let $S$ be a normal scheme. Let $G$ be an extension of an abelian $S$-scheme $A$ by $\GG_m^{r}.$ The pull-back $\pi^*:\bGerbe_S^2 (\GG_{m, A})  \to \bGerbe_S^2 (\GG_{m, G}) $ is surjective. 
\end{proposition}

\begin{proof}
	Denote by $\cTorsRig(\GG_{m, G}) $ the Picard $S$-stack of $\GG_m$-torsors on $G$ with rigidification along the unit section $\epsilon_G: S \to G$. Because of this unit section $\epsilon_G$, the group of isomorphism classes of $\GG_m$-torsors over $G$ with rigidification is canonically isomorphic to the quotient of the group of isomorphism classes of $\GG_m$-torsors over $G$ by the group of isomorphism classes of $\GG_m$-torsors over $S$:
	\begin{equation} \label{eq:TORS-TORSRIG}
	\cTorsRig^1(\GG_{m, G}) \cong \cTors^1(\GG_{m, G}) / \cTors^1(\GG_{m, S}).
	\end{equation}

Denote by $\cCub(G,\GG_m)$ the Picard $S$-stack of $\GG_m$-torsors on $G$ with cubical structure and by $\cCub^i(G,\GG_m)$ for $i=1,0$ its classifying groups. 
Roughly speaking a $\GG_m$-torsor on $G$ is cubical if it satisfies the Theorem of the Cube, for details see \cite[Def 2.2]{Breen83}.
	In \cite[Prop 2.4]{Breen83}, Breen proves the Theorem of the Cube for extensions of abelian schemes by tori which are defined over a normal scheme, that is the forgetful additive functor
	$\cCub(G,\GG_m) \to \cTorsRig(\GG_{m, G})$ is an equivalence of Picard $S$-stacks. In particular
	\begin{equation} \label{eq:CUB-TORSRIG}
	\cCub^1(G,\GG_m) \cong \cTorsRig^1(\GG_{m, G}).
	\end{equation}
	With our hypothesis, by \cite[Chp I, Rem 7.2.4]{MoretBailly}, any $\GG_m$-torsors on $G$ with cubical structure comes from a $\GG_m$-torsors on $A$ with cubical structure. Using the isomorphisms (\ref{eq:TORS-TORSRIG}) and (\ref{eq:CUB-TORSRIG}), we get the surjection
	\begin{equation}
	\pi^* : \cTors^1(\GG_{m, A}) \longrightarrow  
	\cTors^1(\GG_{m, G}).
	\label{eq:surjection-Tors}
	\end{equation}
	Now let $ \cG$ be a $\GG_{m, G}$-gerbe on $G.$
	Breen's semi-local description of gerbes (\ref{eq:datagerbe}) asserts that to have  $ \cG$ is equivalent to have the local data $(\cTors(\GG_{m,G,U}),\Psi_x )_{x \in \cG(U), U \in 
		\bS_{fppf}}$ endowed with the transition data
	$(\psi_x,\xi_x)$. Let $y=\pi( P(x)) \in A(U)$, where $P: \cG \to G$ is the structural morphism of $\cG$.
	The equivalence of $U \times_S U$-stacks $\psi_x :  \cTors (pr_2^*\GG_{m,G,U}) \to \cTors (pr_1^*\GG_{m,G,U})$ induces a bijection between the classifying groups $\psi_x^1: 
	\cTors^1 (pr_2^*\GG_{m,G,U})  \to  \cTors^1 (pr_1^*\GG_{m,G,U}) .$
	Because of the surjection (\ref{eq:surjection-Tors}), all torsors over $G$ come from torsors over $A$ up to isomorphisms, and so 
	we have a bijection
	\[ \psi_y^1: \cTors^1 (pr_2^*\GG_{m,A,U}) \to  \cTors^1 (pr_1^*\GG_{m,A,U}) \]
	such that $ \psi_y^1 = (\pi^*)^* \psi_x^1.$ By pull-back via (\ref{eq:surjection-Tors}),
	the isomorphism of cartesian $S$-functors  $\xi_x : \, pr_{23}^* \psi_x \circ  pr_{12}^* \psi_x \Rightarrow pr_{13}^* \psi_x$
	gives $\xi_y^1 = (\pi^*)^* \xi^1_x: \, pr_{23}^* \psi_y^1 \circ  pr_{12}^* \psi_y^1 \Rightarrow pr_{13}^* \psi_y^1$ over $U \times_S U \times_S U$, which satisfies an analogue of the equality (\ref{eq:datagerbe-2arrows}).
	The local data $(\cTors^1(\GG_{m,A,U}))_{y=\pi( P(x)), x \in \cG(U)}$ endowed with the transition data
	$(\psi_y^1,\xi_y^1)$, which are defined on isomorphism classes of $\GG_m$-torsors, furnish a $ \GG_{m,A}$-equivalence class $\overline{\cG}'$ of a $ \GG_{m,A}$-gerbe on $A$ such that $\pi^*(\overline{\cG}')=\overline{\cG}$.  
\end{proof}

\noindent
We give an immediate application of this result in the case of extensions over a field $k.$ 

\begin{corollary}\label{Br(G)}
Let $G$ be an extension of an abelian variety by a torus over a field $k$. Then $ \Br(G) \cong \HH^2_{\acute {e}t}(G, \GG_{m,G}).$
\end{corollary}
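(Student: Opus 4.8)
The plan is to upgrade the injection $\delta \colon \Br(G) \to \HH^2_{\acute {e}t}(G,\GG_{m,G})$ of Theorem \ref{thm:BR(X)inj} to an isomorphism by establishing surjectivity. First I would record that, being an extension of an abelian variety by a torus over a field, $G$ is smooth and hence regular; by Grothendieck's remark recalled in the Introduction the group $\HH^2_{\acute {e}t}(G,\GG_{m,G})$ is therefore torsion, so it splits as the direct sum of its $\ell$-primary components over all primes $\ell$. Thus it suffices to show that each such component lies in the image of $\delta$.

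Next I would exploit the unit section $\epsilon \colon \spec k \to G$ together with the structural morphism $\rho \colon G \to \spec k$. Since $\rho \circ \epsilon = \id$, the pullback $\epsilon^{*}$ is a retraction of $\rho^{*}$, which yields a canonical splitting
$$\HH^2_{\acute {e}t}(G,\GG_{m,G}) = \ker\big(\HH^2_{\acute {e}t}(\epsilon)\big)\ \oplus\ \rho^{*}\HH^2_{\acute {e}t}(\spec k,\GG_{m}).$$
The two summands I would treat independently. For the second one, recall that over a field the classical Brauer group coincides with $\HH^2_{\acute {e}t}(\spec k,\GG_{m})$, so any class there is $\delta([B])$ for a central simple $k$-algebra $B$; pulling $B$ back along $\rho$ produces an Azumaya algebra $\rho^{*}B$ on $G$, and functoriality of the gerbe-of-trivializations construction $\cA \mapsto \delta(\cA)$ gives $\delta([\rho^{*}B]) = \rho^{*}\delta([B])$. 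Hence the whole summand $\rho^{*}\HH^2_{\acute {e}t}(\spec k,\GG_{m})$ lies in $\mathrm{Im}(\delta)$.

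For the first summand I would argue prime by prime. As $\spec k$ is connected, reduced, geometrically unibranch and noetherian, Corollary \ref{cor:HythForThmCubeOnG} shows that $G$ satisfies the generalized Theorem of the Cube for every prime $\ell$ distinct from $\mathrm{char}(k)$; Proposition \ref{mainThmG} then applies and gives $\ker(\HH^2_{\acute {e}t}(\epsilon))(\ell) \subseteq \Br(G)$ for all such $\ell$. Combining this with the previous paragraph, every torsion class of order prime to $\mathrm{char}(k)$, together with the entire $\rho^{*}\Br(k)$ contribution, is realized by an Azumaya algebra, so $\delta$ is surjective onto the prime-to-$\mathrm{char}(k)$ part of $\HH^2_{\acute {e}t}(G,\GG_{m,G})$.

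The main obstacle is the $\ell$-primary component of $\ker(\HH^2_{\acute {e}t}(\epsilon))$ for $\ell = \mathrm{char}(k) > 0$, which is precisely the case excluded from Proposition \ref{mainThmG}. In characteristic zero this component is absent and the argument above already proves the statement. In positive characteristic I would dispose of it by an external input: a semi-abelian variety over a field is separated, of finite type (hence quasi-compact) and quasi-projective, so it carries an ample invertible sheaf, and the Gabber--de Jong theorem quoted in the Introduction yields $\Br(G) = \HH^2_{\acute {e}t}(G,\GG_{m,G})_{\tors}$, which equals the full group by regularity. This last step, handling the $\mathrm{char}(k)$-primary classes, is where the genuine difficulty (or the appeal to a result outside the paper's own machinery) sits.
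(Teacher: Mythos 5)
Your argument is correct, but it follows a genuinely different route from the paper's. The paper does not split off the unit section at all: it uses the projection $\pi\colon G\to A$ onto the abelian quotient, observes that both $\HH^2_{\acute{e}t}(G,\GG_m)$ and $\HH^2_{\acute{e}t}(A,\GG_m)$ are torsion (Grothendieck) and that Corollary \ref{cor:IsoOnTorsionG-A} makes $\pi^*$ a bijection on every $\ell$-primary component, hence an isomorphism on the whole groups; Gabber's theorem for the \emph{abelian variety} $A$ then transports across the commutative square $\delta\circ\pi^*=\pi^*\circ\delta$ to give surjectivity of $\delta$ for $G$, with no case division on $\ell$ and no appeal to Proposition \ref{mainThmG}. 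You instead decompose $\HH^2_{\acute{e}t}(G,\GG_m)$ via the retraction $\epsilon^*$, handle the prime-to-$\mathrm{char}(k)$ part of the kernel with Corollary \ref{cor:HythForThmCubeOnG} and Proposition \ref{mainThmG}, and dispose of the $\mathrm{char}(k)$-primary part by applying Gabber--de Jong to $G$ itself. Two remarks. First, your final step quietly subsumes everything before it: once you grant that $G$ is separated, quasi-compact and carries an ample invertible sheaf, Gabber--de Jong plus regularity gives $\Br(G)=\HH^2_{\acute{e}t}(G,\GG_m)$ outright, so the careful prime-by-prime work buys nothing except a demonstration that the external input is only genuinely needed for the $p$-part in positive characteristic. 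Second, your quasi-projectivity claim needs a word of justification: the torus $T$ need not be split over $k$, so $G$ is not obviously an open subscheme of a vector bundle over $A$; one should invoke the classical theorem (Chow/Raynaud) that a group scheme of finite type over a field is quasi-projective. The paper sidesteps this by applying Gabber only to the projective scheme $A$, which is arguably the cleaner use of the external result.
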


\begin{proof} By Gabber's unpublished result \cite{deJong}, if $A$ is an abelian variety 
defined over a field $k$, then $\Br(A) \cong \HH^2_{\acute {e}t}(A, \GG_{m,A})$.
We have the following commutative diagram
 \begin{equation}
\xymatrix{
 \Br(A) \ar[d]_{\pi^*} \ar[r]^{\cong \;\;\;\;\;\;}  &  \HH^2_{\acute {e}t}(A,\GG_m) \ar[d]^{\pi^*}\\ 
\Br(G) \ar@{^{(}->}[r]_{\delta \;\;\;\;\;\;}  &  \HH^2_{\acute {e}t}(G, \GG_m) 
}	
\end{equation}
where $\pi : G \to A $ is the surjective morphism of varieties underlying $G$ and $\pi^*$ denotes the pull-back maps of Azumaya algebras and cohomological classes.
By \cite[II, Prop 1.4]{Grothendieck68} the cohomological groups $\HH^2_{\acute {e}t}(G, \GG_m)$ and $\HH^2_{\acute {e}t}(A, \GG_m)$ are torsion groups and so Theorem \ref{thm:H^2-general} and Proposition \ref{cor:IsoOnTorsionG-A} imply that $\pi^* : \HH^2_{\acute {e}t}(A,\GG_m) \to  \HH^2_{\acute {e}t}(G,\GG_m) $ is surjective. Hence the injective homomorphism $\delta$ on the bottom row is surjective too. 
\end{proof}

 Let $G_i$ be an extension of an abelian $S$-scheme $A_i$ by an $S$-torus for $i=1,2,3$, and denote by $\epsilon_i^G: S \to G_i$ its unit section.
Let $ s_{ij}^G:=  G_i \times_S G_j  \to  G_1 \times_S G_2 \times_S G_3 $ be the map obtained from the unit section $\epsilon_k^G: S \rightarrow G_k$ after the base change $G_i\times_S G_j \to S$ (i.e. the map 
which inserts $\epsilon_k^G: S \rightarrow G_k$ into the $k$-th factor for $k \in \{1,2,3 \}-\{i,j\} $).

\begin{corollary}\label{cor:HythForThmCubeOnG}
Let $S$ be a connected, reduced, normal and noetherian scheme and 
let $G_i$ be an extension of an abelian $S$-scheme $A_i$ by $\GG_m^{r_i}$ for $i=1,2,3$. 
 Let $\ell$ be a prime distinct from the residue characteristics of $S$.
Then, with the above notation, the natural homomorphism 
\begin{equation}\label{eq:cube}
\begin{matrix}
\prod s_{ij}^{G*} : \HH_{\acute {e}t}^2(G_1 \times_S G_2 \times_S G_3,\GG_m)(\ell) & \longrightarrow &  \prod_{(i,j) \in \{1,2,3 \}}
 \HH_{\acute {e}t}^2(G_i \times_S G_j ,\GG_m)(\ell)\\
x &  \longmapsto & (s_{12}^{G*} (x), s_{13}^{G*} (x),s_{23}^{G*} (x))
\end{matrix}
\end{equation}
 is injective.

In particular, if the base scheme is connected, reduced, normal and noetherian, an extension of an abelian $S$-scheme by $\GG_m^{r}$ satisfies the generalized Theorem of the Cube for any prime $\ell$ distinct from the residue characteristics of $S$.
\end{corollary}

\begin{proof}
Let $p^G_{ij}  :  G_1 \times_S G_2 \times_S G_3 \rightarrow G_i \times_S G_j $ be the projection maps for $i,j=1,2,3$ and let $\alpha ^G$ be the map

\begin{equation}
\begin{matrix}
\alpha ^G \, : \, \prod_{(i,j) \in \{1,2,3 \}}
 \HH_{\acute {e}t}^2(G_i \times_S G_j ,\GG_m)(\ell) & \longrightarrow & \HH_{\acute {e}t}^2(G_1 \times_S G_2 \times_S G_3,\GG_m) (\ell) \\
((y_2,y_3),(y_1,y_3),(y_1,y_2)) &  \longmapsto & \sum _{i,j=1,2,3} p_{ij}^{G*}(y_i,y_j))
\end{matrix}
\end{equation}
\noindent
Analogously we define the maps $\alpha ^A$ and $p^A_{ij}$. We observe that the injectivity of the map 
$\prod s_{ij}^{G*}$ is equivalent to the surjectivity of the map $\alpha^G$ (\cite[Rem page 55]{Mu}).
If we denote  $\pi_i : G_i \to A_i$ the surjections underlying the extensions $G_i$ (for $i=1,2,3$),
$
(\pi_i \times \pi_j) \circ p^G_{ij} = p^A_{ij} \circ (\pi_1 \times \pi_2 \times \pi_2)$
and so we have the following commutative diagram 
\begin{equation}
\xymatrix{
\prod_{(i,j) \in \{1,2,3 \}} \HH_{\acute {e}t}^2(A_i \times_S A_j ,\GG_m)(\ell)\;  \ar@{>>}[d]\ar@{>>}[r]^{\;\;\;\;\;\alpha ^A}&  \;  
\HH_{\acute {e}t}^2(A_1 \times_S A_2 \times_S A_3,\GG_m)(\ell)	\ar@{>>}[d]
\\
\prod_{(i,j) \in \{1,2,3 \}} \HH_{\acute {e}t}^2(G_i \times_S G_j ,\GG_m)(\ell) \ar[r]^{\;\;\;\;\; \alpha ^G} &  \;  
 \HH_{\acute {e}t}^2(G_1 \times_S G_2 \times_S G_3,\GG_m)(\ell)}
\end{equation}
where the vertical arrows, which are the pull-backs induced by the $\pi_i$, are surjective by 
Proposition \ref{cor:IsoOnTorsionG-A} and Corollary \ref{thm:H^2}. 
The top horizontal arrow is surjective since under our hypotheses abelian $S$-schemes satisfy the generalized Theorem of the Cube (see \cite[Cor 2.6]{Hoobler72}). 
Hence we can conclude that also the down horizontal arrow
is surjective, i.e. $\prod s_{ij}^{G*}$ is injective.
\end{proof}

Using the effectiveness of the 2-descent for $\GG_m$-gerbes via the quotient map $\iota: G \to \cM$ (Lemma \ref{lem:DescentGerbes}), from the above corollary we get

\begin{theorem}\label{thm:HythForThmCubeOnM}
1-motives, which are defined over a connected, reduced, normal and noetherian scheme $S$, and whose underlying tori are split, satisfy the generalized Theorem of the Cube for any prime $\ell$ distinct from the residue characteristics of $S$.
\end{theorem}


\section{Cohomological classes of 1-motives which are Azumaya algebras}\label{proofmainTHM}

Let $S$ be a scheme. We will need the finite site on $S$: first recall that a morphism of schemes $f: X \rightarrow S$ is said to be \textbf{finite locally free} if it is finite and $f_*(\cO_X)$ is a locally free $\cO_S$-module. In particular, by \cite[Prop (18.2.3)]{EGAIV} finite \'etale morphisms are finite locally free. 
The finite site on $S$, denoted 
$\bS_{f}$, is the category of finite locally free schemes over $S$, endowed with 
the topology generated from the pretopology for which the set of coverings of a finite locally free scheme $T$ over $S$ is the set of single morphisms $u: T' \rightarrow T$ such that $u$ is finite locally free and $T=u(T')$ (set theoretically). There is a morphism of site $\tau:\bS_{fppf} \rightarrow \bS_{f}$. 
If $F$ is a sheaf for the \'etale topology, then we define as in \cite[\S 3]{Hoobler72}
$$F(T)_f :=\big\{ y \in F(T) \mid  \mathrm{\;there \; is\; a \;covering\;} u:T' \rightarrow T \mathrm{\; in \;} \bS_f \mathrm{\; with \;} F(u)(y)=0\big\}$$
i.e. $F(T)_f$ are the elements of $F(T)$ which can be split by a finite locally free covering.

\begin{proposition}\label{mainThmG}
Let $G$ be an extension of an abelian scheme by a torus, which is defined over a normal and noetherian scheme $S$, and which satisfies the generalized Theorem of the Cube for a prime number $\ell$ distinct from the 
 characteristics of $S$. Then the $\ell$-primary component of the kernel of the homomorphism $\HH^2_{\acute {e}t}(\epsilon):\HH^2_{\acute {e}t}(G,\GG_{m,G}))\rightarrow \HH^2_{\acute {e}t}(S,\GG_{m,S})$ induced by the unit section 
$\epsilon : S \rightarrow G$ of $G,$ is contained in the Brauer group of $G$:
$$\ker \big[\HH^2_{\acute {e}t}(\epsilon):\HH^2_{\acute {e}t}(G,\GG_{m,G}))\longrightarrow 
\HH^2_{\acute {e}t}(S,\GG_{m,S})\big] (\ell) \subseteq \Br(G).$$ 
\end{proposition}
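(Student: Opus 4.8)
The plan is to follow Hoobler's strategy: use the cube theorem to split $x$ by a finite étale self-cover of $G$, and then manufacture an Azumaya algebra out of that splitting. Fix $x$ in $\ker\big[\HH^2_{\acute {e}t}(\epsilon)\big](\ell)$; by Corollary \ref{thm:H^2} we may view $x$ as the $\GG_{m,G}$-equivalence class of a $\GG_{m,G}$-gerbe $\cG$ on $G$ whose restriction $\epsilon^*\cG$ is neutral. Since $x$ is $\ell$-primary, choose $n$ with $\ell^n x=0$. As $G$ (which is the Picard stack of the $1$-motive $[0\to G]$) satisfies the generalized Theorem of the Cube for $\ell$, Corollary \ref{cor-thmCube} applies and yields $N^*x=0$, where $N=\ell^n_G$ if $\ell\neq 2$ and $N=\ell^{n+1}_G$ if $\ell=2$. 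Because $\ell$ is prime to the residue characteristics of $S$ and $G$ is an extension of an abelian scheme by a torus, the isogeny $N\colon G\to G$ is finite locally free, indeed finite étale and surjective; thus $N$ is a covering in the finite site $\bS_f(G)$ that trivializes $x$, which in the notation introduced before the statement exhibits $x$ as an element of $\HH^2_{\acute {e}t}(G,\GG_{m,G})_f$.

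The next step is to produce the algebra. Since $N^*\cG$ is neutral it carries a global object, i.e.\ a $\cG$-twisted invertible sheaf over the cover; pushing the structure sheaf forward along the finite locally free map gives a $\cG$-twisted locally free $\cO_G$-module $\cE:=N_*\cO_G$ of rank $\deg N$, and I set $\cA:=\uEnd_{\cO_G}(\cE)$. By construction $\cA$ is an $\cO_G$-algebra that is étale-locally a matrix algebra, hence an Azumaya algebra over $G$, and its gerbe of trivializations $\delta(\cA)$ is canonically equivalent to $\cG$, so that $\overline{\delta(\cA)}=x$ and $x\in\Br(G)$. Closer to Hoobler's formulation, the $2$-descent datum for $\cG$ along $N$ lives on $G\times_{N,G,N}G\cong G[N]\times_S G$ and amounts to a central extension of the finite locally free $S$-group scheme $G[N]$ by $\GG_{m}$ over $G$; the rigidification supplied by $\epsilon^*\cG$ neutral normalizes this extension at the identity, and its canonical weight-one (Stone--von Neumann type) representation produces a $\PGL$-torsor on $G$, equivalently the class of $\cA$.

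The normality of $S$ enters exactly as in Lemma \ref{lem:SnormalGerbesT} and Proposition \ref{prop:SnormalGerbesG-A}, forcing the vanishing of the relevant torsor groups so that the descent data assemble into a genuine algebra rather than merely a gerbe, while the hypothesis $\epsilon^*x=0$ furnishes the normalization making the representation canonical. The main obstacle is precisely this passage from a degree-two class to an honest $\PGL$-torsor: one must check that a class split by the finite locally free cover $N$ genuinely lifts to an Azumaya algebra with the correct Brauer class, and not only to something ``gerby''. Concretely, I expect the technical heart to be verifying that $\cE=N_*\cO_G$ is locally free of everywhere positive rank (using that $N$ is finite locally free) and that $\uEnd_{\cO_G}(\cE)$ has Brauer class equal to $[\cG]$; this is what recovers Hoobler's Theorem \cite[Thm 3.3]{Hoobler72}, and the finite site $\bS_f$ together with the comparison of topologies is the framework that makes the argument run.
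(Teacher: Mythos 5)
Your high-level plan is the right one --- use Corollary \ref{cor-thmCube} to split $x$ by the finite locally free isogeny $N=\ell^{2n}$ and then manufacture an Azumaya algebra from the splitting, recovering Hoobler --- but the central construction as written is wrong, and the error sits exactly at the point you defer as ``the technical heart''. The sheaf $\cE:=N_*\cO_G$ is an honest (untwisted) locally free $\cO_G$-module, so $\uEnd_{\cO_G}(\cE)$ is a \emph{trivial} Azumaya algebra and $\overline{\delta(\cA)}=0$, not $x$. To get the class $x$ you must push forward not $\cO_G$ but a \emph{twisted} invertible sheaf on the cover, i.e.\ the weight-one line bundle supplied by a trivialization of $N^*\cG$; only then is $N_*$ of it a $\cG$-twisted locally free module of rank $\deg N$ whose endomorphism algebra is untwisted with Brauer class $x$. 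That is a legitimate (and nowadays standard) argument, but the paper develops no theory of twisted sheaves on its (non-algebraic) stacks, so you would have to build or import that machinery; you cannot simply assert that $N_*\cO_G$ ``is'' $\cG$-twisted. The theta-group variant in your second paragraph (central extension of $G[N]$ by $\GG_m$ plus a Stone--von Neumann representation) is likewise only a sketch, and over a general normal base the existence and canonicity of that representation is precisely what needs proof.

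For comparison, the paper avoids twisted sheaves entirely and argues cohomologically: it introduces the finite site $\bS_f$, uses the Leray spectral sequence for $\tau:\bS_{fppf}\to\bS_f$ together with the Kummer sequence to show that a suitable $\mu_{\ell^n}$-lift $y$ of $x$ lies in $\HH^2_{\acute{e}t}(G,\mu_{\ell^n})_f$, then concludes $x\in\tau^*\HH^2_f(G_f,\GG_m)\subseteq\check{\HH}^2_{\acute{e}t}(G,\GG_m)$ by Hoobler's Lemma 3.2, and finally applies Hoobler's Proposition 3.1. In that argument the normality of $S$ enters through Breen's theorem of the cube for line bundles (\cite[Prop 2.4]{Breen83}), used to show $(\ell^{2n})^*z=\ell^{2n}z'$ in $\Pic(G)$ and hence to kill the edge-map obstruction --- not, as you suggest, through Lemma \ref{lem:SnormalGerbesT} or Proposition \ref{prop:SnormalGerbesG-A}; and the hypothesis $\epsilon^*x=0$ is the normalization required by Hoobler's Proposition 3.1, whereas in your route it plays no visible role (which should itself have been a warning sign that you had drifted from the claimed strategy). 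As it stands the proposal has a genuine gap: the object whose endomorphisms you take does not carry the class $x$.
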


\begin{proof} In order to simplify notations we denote by $ \ker (\HH^2_{\acute {e}t}(\epsilon))$ the kernel of the homomorphism $\HH^2_{\acute {e}t}(\epsilon):\HH^2_{\acute {e}t}(G,\GG_{m,G})\rightarrow \HH^2_{\acute {e}t}(S,\GG_{m,S}).$ We identify \'etale and fppf cohomologies for the smooth group schemes $\mu _{\ell^n}$ and $\GG _m.$

(1) First we show that $\HH^2_f(G, \tau_*\mu_{\ell^n})$ is isomorphic to $\HH^2_{\acute {e}t}(G,\mu_{\ell^n})_f$.
By definition, $\R^1 \tau_*\mu_{\ell^n}$ is the sheaf on $G$ associated to the presheaf $U \rightarrow 
\HH^1(U_{fppf},\mu_{\ell^n})$. This latter group classifies torsors in $U_{fppf}$ under the finite locally free group scheme $\mu_{\ell^n}$.
Since any $\mu_{\ell^n}$-torsor is trivialized by using itself as an $f$-cover,
 we have that $\R^1 \tau_*\mu_{\ell^n}=(0)$. The Leray spectral sequence for the morphism of sites
$\tau:\bS_{fppf} \rightarrow \bS_{f}$ (see \cite[page 309]{Milne80}) gives then the isomorphism 
$$
 \HH^2_f(G, \tau_*\mu_{\ell^n}) \cong \ker \big[\HH^2_{\acute {e}t}(G,\mu_{\ell^n}) \stackrel{\pi}{\longrightarrow} 
\HH^0_f
(G,\R^2 \tau_* \mu_{\ell^n})\big] = \HH^2_{\acute {e}t}(G,\mu_{\ell^n})_f
$$
where the map $\pi$ is the edge morphism which can be interpreted as the canonical morphism from the presheaf $U \rightarrow \HH^2_{\acute {e}t}(U_{fppf} ,\mu_{\ell^n}) $ to the associated sheaf $\R^2 \tau_* \mu_{\ell^n}$.

(2) Now we prove that $\HH^2_{\acute {e}t}(G,\mu_{\ell^\infty})_f$ maps onto
$ \ker (\HH^2_{\acute {e}t}(\epsilon))(\ell).$
Let $x$ be an element of $ \ker (\HH^2_{\acute {e}t}(\epsilon))$ with $\ell^n x=0$ for some $n$. The filtration on the Leray spectral sequence for 
$\tau:\bS_{fppf} \rightarrow \bS_{f}$ and the Kummer sequence give the following exact commutative diagram
\begin{equation}
\xymatrix{ & & \Pic(G) \ar[r]^{\pi' \qquad \qquad} \ar[d]_{d} & \HH^0_f(G,(\R^1 \tau_*\GG_{m})_{\ell^n}) \ar[d]_{d'}\\
 0 \ar[r] & \HH^2_{\acute {e}t}(G,\mu_{\ell^n})_f \; \ar[r] \ar[d]& \HH^2_{\acute {e}t}(G,\mu_{\ell^n}) \ar[r]^{\pi \qquad} \ar[d]_{i} & \HH^0_f(G,\R^2 \tau_* \mu_{\ell^n}) \ar[d]\\
0 \ar[r] &\HH^2_{\acute {e}t}(G,\GG_{m})_f \; \ar[r] & \HH^2_{\acute {e}t}(G,\GG_{m}) \ar[r] \ar[d]_{\ell^n} & \HH^0_f(G, \R^2 \tau_*\GG_{m} ) \\
& & \HH^2_{\acute {e}t}(G,\GG_{m,G}) & 
}	
\end{equation}
where $(\R^1 \tau_*\GG_{m})_{\ell^n}$ is the cokernel of the multiplication by $\ell^n.$ Since $\ell^n x=0$, we can choose an $y \in \HH^2_{\acute {e}t}(G,\mu_{\ell^n})$ such that $i(y)=x.$ By Corollary \ref{cor-thmCube}, the isogeny $\ell^{2n}:G \to G$ is a finite locally free covering which splits $x$, that is $x \in \HH^2_{\acute {e}t}(G,\GG_{m})_f.$ Moreover $i( (l^{2n})^* y)=  (l^{2n})^* i(y)=0$ and so there exists an element $z \in  \Pic(G)$ such that $d(z)=  (l^{2n})^* y.$ In particular $d'(\pi'(z))=  \pi((l^{2n})^* y)$ is an element of $\HH^0_f(G,\R^2 \tau_* \mu_{\ell^n}).$ By the Theorem of the Cube for the extension $G$ (see \cite[Prop 2.4]{Breen83}), we have that $(l^{2n})^* z= l^{2n} z'$ for some 
$z' \in  \Pic(G)$, which implies that $\pi' (z)=0$ in $(\R^1 \tau_*\GG_{m})_{\ell^n}(G_f)$. From the equality $ \pi((l^{2n})^* y) = d'(\pi'(z))= 0$ follows $\pi (y)=0$, which means that $y$ is an element of $\HH^2_{\acute {e}t}(G,\mu_{\ell^n})_f.$ 

(3) Here we show that $\ker (\HH^2_{\acute {e}t}(\epsilon))(\ell) \subseteq \tau^* \HH^2_f(G,\GG_{m})$.
By the first two steps, $\HH^2_f(G,\tau_* \mu_{\ell^n})$ maps onto
$ \ker (\HH^2_{\acute {e}t}(\epsilon))(\ell).$
Since $\HH^2_f(G, \tau_*\mu_{\ell^n}) \subseteq \HH^2_f(G, \tau_*\GG_{m})$, we can then conclude that $\ker (\HH^2_{\acute {e}t}(\epsilon))(\ell) \subseteq \tau^* \HH^2_f(G,\GG_{m})$.

(4) Let $x$ be an element of $ \ker (\HH^2_{\acute {e}t}(\epsilon))$ with $\ell^n x=0$ for some $n$.
Using the morphism of sites $\tau:\bS_{fppf} \rightarrow \bS_{f},\,$ in  \cite[Lem 3.2]{Hoobler72} Hoobler built a commutative diagram where upper and lower lines  give rise to spectral sequences comparing $\check{\mathrm{C}}$ech and sheaf cohomology for $\bS_{fppf}$ and $\bS_{f}$. Using this diagram he showed that the group $ \tau^* \HH^2_f(G,\GG_{m})$ is contained in the $\check{\mathrm{C}}$ech cohomology group $\check{\HH}^2_{\acute {e}t}(G,\GG_m)$. 
Thus, from step (3), $x$ is an element of $\check{\HH}^2_{\acute {e}t}(G,\GG_m)$. 
By Corollary \ref{cor-thmCube}, the isogeny $\ell^{2n}:G \to G$ is a finite locally free covering which splits $x$, that is $x \in \check{\HH}^2_{\acute {e}t}(G,\GG_m)_f.$ In fact $x $ is an element of $ \ker \big[\check{\HH}^2_{\acute {e}t}(\epsilon):\check{\HH}^2_{\acute {e}t}(G,\GG_m) \to 
\check{\HH}^2_{\acute {e}t}(S,\GG_m)\big]_f$. Finally by \cite[Prop 3.1.]{Hoobler72} we can then conclude that $x$ is an element of $\Br(G).$
\end{proof}

Finally we can prove

\begin{theorem}\label{mainTHM}
Let $M=[u:X \rightarrow G]$ be a 1-motive defined over a normal and noetherian scheme $S$. Assume that the extension $G$ underlying $M$ satisfies the generalized Theorem of the Cube for a prime number $\ell$ distinct from the residue characteristics of $S$. Then the $\ell$-primary component of the kernel of the homomorphism $\HH^2_{\acute {e}t}(\epsilon):\HH^2_{\acute {e}t}(\cM,\GG_{m,\cM})\rightarrow \HH^2_{\acute {e}t}(S,\GG_{m,S})$ induced by the unit section 
$\epsilon : S \rightarrow M$ of $M,$ is contained in the Brauer group of $M$:
$$\ker \big[\HH^2_{\acute {e}t}(\epsilon):\HH^2_{\acute {e}t}(\cM,\GG_{m,\cM})\longrightarrow \HH^2_{\acute {e}t}(S,\GG_{m,S})\big] (\ell) \subseteq \Br(\cM).$$ 
\end{theorem}

\begin{proof}
We have to show that if $G$ satisfies the generalized Theorem of the Cube for a prime $\ell$, then
\[
\ker \big[\HH^2_{\acute {e}t}(\epsilon):\HH^2_{\acute {e}t}(\cM,\GG_{m,\cM}))\longrightarrow 
\HH^2_{\acute {e}t}(S,\GG_{m,S})\big](\ell) \subseteq \Br(\cM).
\] 
In order to simplify notations we denote by $ \ker (\HH^2_{\acute {e}t}(\epsilon))$ the kernel of the homomorphism $\HH^2_{\acute {e}t}(\epsilon):\HH^2_{\acute {e}t}(\cM,\GG_{m,\cM}))\rightarrow \HH^2_{\acute {e}t}(S,\GG_{m,S}).$ Let $x$ be an element of $\ker (\HH^2_{\acute {e}t}(\epsilon))$ such that $\ell^n x =0$ for some $n$. Let $y= \iota^* x$ the image of $x$ via the homomorphism $\iota^*: \HH^2_{\acute {e}t}(\cM,\GG_{m,\cM}) \to \HH^2_{\acute {e}t}(G,\GG_{m,G})$ induced by the quotient map $\iota: G\to [G/X]$. Because of the commutativity of the following diagram 
\begin{equation}
\xymatrix{
 \HH^2_{\acute {e}t}(\cM,\GG_{m,\cM}) \ar[d]_{\HH^2_{\acute {e}t}(\epsilon)} \ar[r]^{\iota^*}&  \; \HH^2_{\acute {e}t}(G,\GG_{m,G}) \ar[d]^{\HH^2_{\acute {e}t}(\epsilon_G)}\\ 
\HH^2_{\acute {e}t}(S, \GG_{m,S}) \ar@{=}[r]&  \; \HH^2_{\acute {e}t}(S, \GG_{m,S}) 
}	
\end{equation}
(since $\epsilon : S \rightarrow \cM$ is the unit section of $\cM$ and $\epsilon_G : S \rightarrow G$ is the unit section of $G$, $\iota \circ \epsilon_G = \epsilon$), $y$ is in fact an element of $\ker (\HH^2_{\acute {e}t}(\epsilon_G))(\ell).$  By Proposition \ref{mainThmG}, we know that 
 $\ker (\HH^2_{\acute {e}t}(\epsilon_G))(\ell) \subseteq \Br(G),$ and therefore the element $y$ defines a class $[A]$ in $\Br(G)$, with $A$ an Azumaya algebra on $G$.

 Via the isomorphisms $\bGerbe^2_{S(\cM)} (\GG_{m,\cM}) \cong \HH^2_{\acute {e}t}(\cM,\GG_{m,\cM})$ and $\bGerbe^2_S (\GG_{m,G}) \cong \HH^2_{\acute {e}t}(G,\GG_{m,G})$ obtained in Corollary \ref{thm:H^2}, the element $x$ corresponds to the $ \GG_{m,\cM}$-equivalence class $\overline {\cH}$ of a $ \GG_{m,\cM}$-gerbe $\cH$ on $\cM$, and 
 the element $y$ corresponds to the $ \GG_{m,G}$-equivalence class $\overline{\iota^*\cH}$ of the $ \GG_{m,G}$-gerbe $\iota^* \cH$ on $G$, which is 
 the pull-back of $\cH$ via the quotient map $\iota : G\to [G/X]$.
By the effectiveness of the 2-descent of $\GG_m$-gerbes with respect to $\iota$ proved in Lemma \ref{lem:DescentGerbes}, we can identify the $ \GG_{m,\cM}$-gerbe $\cH$ on $\cM$ with the triplet $(\iota^*\cH,\varphi,\gamma),$
where $(\varphi,\gamma)$ is 2-descent data on the $ \GG_{m,G}$-gerbe $\iota^* \cH$ 
 with respect to $\iota$. More precisely, 
$\varphi: p_2^* \iota^*\cH \rightarrow \mu^* \iota^* \cH$ is an equivalence of gerbes on~$X\times_S G$ and $\gamma: \big((\id_X\times
\mu)^*\varphi\big) \circ \big((p_{23})^* \varphi\big) \Rightarrow (m_X\times id_G)^* \varphi$ is a natural isomorphism  which satisfy the compatibility condition (\ref{eq:descentdataGerbe}). 
Moreover via the inclusions $\ker (\HH^2_{\acute {e}t}(\epsilon_G))(\ell) \hookrightarrow \Br(G) \stackrel{\delta}{\hookrightarrow} \HH^2_{\acute {e}t}(G,\GG_{m,G}),$ 
 in $\bGerbe^2_S (\GG_{m,G})$ the class $\overline{\iota^* \cH}$ coincides with the class $\overline{\delta(A)}$ of the gerbe of trivializations of the Azumaya algebra $A$.

Now we will show that the 2-descent data $( \varphi,\gamma)$ with respect to $\iota$ on the $ \GG_{m,G}$-gerbe $\iota^* \cH$ induces a descent datum 
$\varphi^A: p_2^* A \rightarrow \mu^* A$ with respect to $\iota$ on the Azumaya algebra $A$, which satisfies the cocycle condition (\ref{eq:descentdataAzumaya}).
Since the statement of our main theorem involves classes of Azumaya algebras and $ \GG_{m}$-equivalence classes  of  $ \GG_{m}$-gerbes, we may assume that
$\iota^* \cH = \delta(A)$, and so the pair $(\varphi,\gamma)$ are canonical 2-descent data on $\delta(A)$. 
The equivalence of gerbes $\varphi: p_2^* \delta(A) \rightarrow \mu^* \delta(A)$  on~$X\times_S G$ 
implies an equivalence of categories $\varphi(U): p_2^* \delta(A)(U) \rightarrow \mu^* \delta(A)(U)$  for any object  $U $ of $\bS_{\acute {e}t} $ and 
hence we have the  following diagram 
 
\begin{equation}
\xymatrix{
\uEnd_{\cO_{X \times_S G}} (p_2^*  \cL_1)  \ar[d]_{\End \varphi(U)} \ar[r]^{ \quad a_1}&  \;   p_2^* A_{|U} \\ 
\uEnd_{\cO_{X \times_S G}} (\mu^* \cL_2)   \ar[r]^{\quad a_2}&  \; \mu^* A_{|U}
}	
\end{equation}
with $\cL_1$ and $\cL_2$ objects of $\Lf(G)(U).$ 
For any object  $U $ of $\bS_{\acute {e}t} ,$ we define  $\varphi^A_{|U}:= a_2 \circ  \End \varphi(U) \circ a_1^{-1}   : p_2^* A_{|U} \rightarrow \mu^* A_{|U}$. It is an isomorphism of Azumaya algebras over $U$. The collection $(\varphi^A_{|U})_U $ of all these isomorphisms furnishes the expected isomorphism  of Azumaya algebras   $\varphi^A : p_2^* A \rightarrow \mu^* A $ on~$X\times_S G$.
For any object  $U $ of $\bS_{\acute {e}t} ,$ the natural isomorphism
$\gamma: \big((\id_X\times
\mu)^*\varphi\big) \circ \big((p_{23})^* \varphi\big) \Rightarrow (m_X\times id_G)^* \varphi$ induces
$\big((\id_X\times
\mu)^* \End \varphi(U) \big) \circ \big( (p_{23})^* \End \varphi(U) \big) = (m_X\times id_G)^*\End \varphi(U)  $ and so
  $\varphi^A$ satisfies the cocycle condition 
(\ref{eq:descentdataAzumaya}).

By Lemma \ref{lem:DescentAzumaya} the descent of Azumaya algebras with respect to $\iota$ is effective, and so the pair 
$(A, \varphi^A)$ corresponds to an Azumaya algebra $\cA$ on $\cM$, whose equivalence class $[\cA]$ is an element of $\Br(\cM)$.
\end{proof}

 \begin{corollary}\label{Br(M)}
Let $M$ be a $1$-motive defined over an algebraically closed field of characteristic zero. Then  $\Br(\cM)\cong \HH^2_{\acute {e}t}(\cM, \GG_{m,\cM} )$. 
\end{corollary}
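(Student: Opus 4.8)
The plan is to establish the surjectivity of the injective homomorphism $\delta\colon \Br(\cM) \to \HH^2_{\acute {e}t}(\cM,\GG_{m,\cM})$ produced by Theorem \ref{thm:BR(X)inj}; injectivity already gives one half, so the whole content is surjectivity. I would obtain it by revisiting the proof of Theorem \ref{mainTHM} and replacing its arithmetic input, Proposition \ref{mainThmG} (which only controls the $\ell$-primary part for $\ell$ prime to the characteristic), by the sharper Corollary \ref{Br(G)}, which over a field identifies $\Br(G)$ with the \emph{entire} group $\HH^2_{\acute {e}t}(G,\GG_{m,G})$.

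First I would record the two simplifications coming from $k=\ok$. On one hand, $S=\spec k$ is normal, noetherian, connected, reduced and geometrically unibranch, so Theorem \ref{thm:HythForThmCubeOnM} guarantees that $M$ (and the extension $G$ underlying it) satisfies the generalized Theorem of the Cube. On the other hand $\HH^i_{\acute {e}t}(\spec k,\GG_m)=0$ for $i>0$ since the absolute Galois group of an algebraically closed field is trivial; in particular $\HH^2_{\acute {e}t}(S,\GG_{m,S})=\Br(k)=0$, so $\HH^2_{\acute {e}t}(\epsilon)$ is the zero map and
\[
\ker\big[\HH^2_{\acute {e}t}(\epsilon)\big]=\HH^2_{\acute {e}t}(\cM,\GG_{m,\cM}).
\]
Thus it suffices to prove that \emph{every} class of $\HH^2_{\acute {e}t}(\cM,\GG_{m,\cM})$ lies in the image of $\delta$.

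Then, mimicking the proof of Theorem \ref{mainTHM}: given $x\in\HH^2_{\acute {e}t}(\cM,\GG_{m,\cM})$, I represent it by a $\GG_{m,\cM}$-gerbe $\cH$ on $\cM$ via Corollary \ref{thm:H^2}, and set $y=\iota^* x=\overline{\iota^*\cH}\in\HH^2_{\acute {e}t}(G,\GG_{m,G})$. The decisive step is that, over a field, Corollary \ref{Br(G)} gives $\Br(G)\cong\HH^2_{\acute {e}t}(G,\GG_{m,G})$ on the nose, with no restriction on primes, so $y=\overline{\delta(A)}$ for an Azumaya algebra $A$ on $G$ irrespective of its $p$-primary behaviour. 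Using Lemma \ref{lem:DescentGerbes} I identify $\cH$ with the triplet $(\iota^*\cH,\varphi,\gamma)$, and after the $\GG_m$-equivalence $\iota^*\cH\cong\delta(A)$ I transport the $2$-descent datum $(\varphi,\gamma)$ to a descent datum $\varphi^A\colon p_2^*A\to\mu^*A$ satisfying the cocycle condition (\ref{eq:descentdataAzumaya}), exactly as in Theorem \ref{mainTHM}. The effectiveness of descent of Azumaya algebras along $\iota\colon G\to[G/X]\cong\cM$ (Lemma \ref{lem:DescentAzumaya}) then yields an Azumaya algebra $\cA$ on $\cM$ with $\delta(\cA)\cong\cH$, whence $\delta([\cA])=x$. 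Combined with the injectivity of Theorem \ref{thm:BR(X)inj}, this gives $\Br(M)=\Br(\cM)\cong\HH^2_{\acute {e}t}(\cM,\GG_{m,\cM})=\HH^2_{\acute {e}t}(M,\GG_m)$.

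The main obstacle, and the reason one cannot simply quote Theorem \ref{mainTHM} as a black box, is the $p$-primary component of the cohomology: the cube-and-descent machinery behind Theorem \ref{mainTHM} only reaches the part prime to $\mathrm{char}(k)$, and for an abelian (or semi-abelian) variety in positive characteristic the $p$-primary part of $\HH^2_{\acute {e}t}$ is genuinely present. What makes the argument go through is that Corollary \ref{Br(G)} — resting on Gabber's theorem for the abelian part together with the torus computation of Corollary \ref{cor:IsoOnTorsionG-A}, both insensitive to $p$ — supplies the full $G$-level statement, after which the descent is purely formal; the only point requiring care is checking that the $2$-descent datum on the gerbe really induces a cocycle-compatible descent datum on the Azumaya algebra, which is the same verification already carried out in the proof of Theorem \ref{mainTHM}.
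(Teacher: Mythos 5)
Your proposal is correct and follows essentially the same route as the paper: the paper likewise combines $\HH^2_{\acute{e}t}(S,\GG_{m,S})=0$ with Theorem \ref{mainTHM}, explicitly remarking that Corollary \ref{Br(G)} replaces the cube hypothesis on $G$ so that the argument covers all primes, including the characteristic. The only cosmetic difference is that the paper first checks that $\HH^2_{\acute{e}t}(\cM,\GG_{m,\cM})$ is a torsion group in order to invoke Theorem \ref{mainTHM} primary component by primary component, whereas you rerun its gerbe-to-Azumaya descent argument directly on an arbitrary class, which works just as well.
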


\begin{proof} First we prove that $\HH^2_{\acute {e}t}(\cM, \GG_{m,\cM} )$ is a torsion group: in fact,
as already observed, $\HH^2_{\acute {e}t}(G, \GG_{m,G} )$ is a torsion group and by Corollary \ref{thm:H^2} $ \bGerbe^2_{S(\cM)}(\GG_{m,\cM}) \cong \HH^2_{\acute {e}t}(\cM, \GG_{m,\cM} )$  and $ \bGerbe^2_{S}(\GG_{m,G}) \cong \HH^2_{\acute {e}t}(G, \GG_{m,G} )$.
Now by Lemma \ref{lem:DescentGerbes}, the 2-descent of $\GG_m$-gerbes with respect to the quotient map
$\iota : G\to [G/X] \cong \cM$ is effective and so also the group  $\HH^2_{\acute {e}t}(\cM, \GG_{m,\cM} )$ is torsion.
Finally, if $S$ is the spectrum of an algebraically closed field of characteristic zero
$\HH^2_{\acute {e}t}(S, \GG_{m,S} )=0$, and therefore the statement is a consequence of Theorem \ref{mainTHM}.
\end{proof}

\section*{Appendix: A communication from P. Deligne on 2-descent theory for stacks}

Let $\bS$ be a site. For our applications, $\bS$ will be the site of a scheme $S$ for a Grothendieck topology, that is the category of $S$-schemes endowed with a Grothendieck topology.

Here we describe the 2-descent theory for stacks over $\bS$ following Deligne's indications. We will consider two different points of view in order to describe this 2-descent. 

We start with the point of view of covering sieves. Let $\mathcal{C}r$ be a covering sieve of the final object of $\bS$. In the case of a scheme $S$, $\mathcal{C}r$ is a sieve generated by a covering family of arrows $(X_i \to S)_i$, that is  $\mathcal{C}r$ consists of the $S$-schemes $T$ such that it exists a morphism $T \to X_i$ of $S$-schemes for an index $i$.
With these notation, the 2-descent data for a stack $\cX$ with respect to the functor $\mathcal{C}r \to \bS$ are the following:

\begin{itemize}
\item for any object $T$ in $\mathcal{C}r$, a stack $\cX_T$ over $T$ 
\item for any arrow $a : T' \to T$ between objects of $\mathcal{C}r$, an equivalence of stacks $\varphi _a : a^*\cX_T \to \cX _{T'} $ over $T'$, 
\item for any composite $T'' \stackrel{b}{\to} T' \stackrel{a} {\to} T$ in $\mathcal{C}r$, a natural isomorphism $\varphi _{ab} \Rightarrow \varphi_{b} \circ  b^* \varphi_{a} $ between equivalences of stacks over $T'',$ 
\item for any triple composite $T'''\stackrel{c}{\to}  T'' \stackrel{b}{\to} T' \stackrel{a}{\to} T$ in $\mathcal{C}r,$ a compatibility condition 
$ \varphi_{bc} \circ (bc)^* \varphi_{a} =  \varphi_{c} \circ c^* \varphi_{ab}$ involving the above natural isomorphisms.
\end{itemize}

Now we describe the 2-descent of stacks from the point of view of $\check{\mathrm{C}}$ech-coverings.
Let $S'$ be a covering of the final object of the site $\bS$ that we denote by $S$. In the case of the site of a scheme $S$ for a Grothendieck topology, $S'$ is a covering of $S$ for the chosen Grothendieck topology. The 2-descent data for a stack with respect to the covering $S' \to S$ are the following:

\begin{enumerate}
\item  a stack $\cX$ over $S'$ 
\item  over $S' \times_S S'$, an equivalence of stacks $\varphi : p_1^* \cX  \to  p_2^* \cX$, where $p_{i}: S' \times_S S' \to S'$ are the natural projections,
\item  over $S' \times_S S' \times_S S'$, a natural isomorphism $\gamma  : \ p_{23}^*\varphi \circ p_{12}^* \varphi \Rightarrow p_{13}^* \varphi$, 
 where  $p_{ij}:S' \times_S S' \times_S S' \to S' \times_S S'$ are the partial projections,
\item over $S' \times_S S' \times_S S' \times_S S'$,  the compatibility condition
\begin{equation}
p_{134}^* \gamma \circ [p_{34}^* \varphi * p_{123}^* \gamma ] =
  p_{124}^* \gamma \circ [p_{234}^* \gamma * p_{12}^* \varphi ],
\end{equation} 
where $p_{ijk}: S' \times_S S' \times_S S' \times_S S' \times_S S' \to S' \times_S S' \times_S S'$ and $p_{ij}:  S' \times_S S' \times_S S' \times_S S' \to  S' \times_S S'$ are the partial projections.
\end{enumerate}

A nice explanation of this last compatibility condition can be found in \cite[page 442 diagram (6.2.8)]{Breen90}.
 The above 2-descent of stacks through $\check{\mathrm{C}}$ech-coverings furnishes Breen's semi-local description of gerbes cited in Section 2.4.
 
 Now if $\cG$ be an $\cL$-gerbe on $\bS$, with $\cL$ an abelian band,
 the 2-descent data for $\cG$ with respect to the covering $S' \to S$ become in this case the following:

\begin{itemize}
\item[($1'$)] a neutral $\cL$-gerbe over $S'$, that is $\cTors(\cL)$, or a neutralization of  a $\cL$-gerbe $\cG'$ over $S'$, that is an equivalence of $S'$-stacks $\Psi: \, \cG'  \to \cTors(\cL) $,
\item[($2'$)] over $S' \times_S S'$, an $\cL$-torsor $T(1,2)$. In fact by \cite[Chp IV, Prop 5.2.5 (iii)]{Giraud} we have an equivalence of categories between the category of equivalences between $p_1^* \cTors(\cL)$ and $  p_2^*  \cTors(\cL)$ and the category of $\cL$-torsors.
\item[($3'$)] over $S' \times_S S' \times_S S'$, an isomorphism $\gamma: T(1,2) \wedge^\cL T(2,3) \to T(1,3)$ of $\cL$-torsors,
\item[($4'$)] over $S' \times_S S' \times_S S' \times_S S'$, the compatibility condition is that the two isomorphisms of $\cL$-torsors from 
$T(1,2) \wedge^\cL T(2,3) \wedge^\cL T(3,4)$ to $T(1,4)$ should be equal.
\end{itemize}

\par\noindent \textbf{Remark} If in ($2'$) the $\cL$-torsor $T(1,2)$ is trivial, that is $T(1,2)=\cL$, then the datum ($3'$) becomes a section of $\cL$ over $S' \times_S S' \times_S S'$ and the compatibility condition ($4'$) becomes that this section is in fact a 2-cocycle (see also end of \cite[end of 2.3]{Breen94tan}). This is the heuristic explanation of the homological interpretation of gerbes proved in Theorem \ref{thm:H^2-general}.

\bibliographystyle{plain}

\begin{thebibliography}{10}

\bibitem{AM16} 
B. Antieau, L. Meier.
\newblock The Brauer group of the moduli stack of elliptic curves.
\newblock ArXiv:1608.00851  (to appear in Algebra and Number Theory) 


\bibitem{B09} 
C. Bertolin.
\newblock Multilinear morphisms between 1-motives 
\newblock J. Reine Angew. Math. 637 (2009), pp. 141--174.

\bibitem{B11} 
C. Bertolin.
\newblock Extensions of Picard stacks and their homological interpretation.
\newblock J. Algebra 331 (2011), pp. 28--45. 

\bibitem{B12} 
C. Bertolin.
\newblock Homological interpretation of extensions and biextensions of 1-motives 
\newblock J. Number Theory 132 (2012), no. 10, pp. 2103--2131.

\bibitem{B13} 
C. Bertolin.
\newblock Biextensions of Picard stacks and their homological interpretation.
\newblock  Adv. Math. 233 (2013), pp. 1--39.


\bibitem{BT0} 
C. Bertolin, A. E. Tatar.
\newblock Extensions of Picard 2-stacks and the cohomology groups $\mathrm{Ext}^i$ of length 3 complexes.                                                    
\newblock Ann. Mat. Pura Appl. 193 (2014), no. 1, pp. 291--315.

\bibitem{BT} 
C. Bertolin, A. E. Tatar.
\newblock Higher-dimensional study of extensions via torsors.
\newblock Ann. Mat. Pura Appl. 197 (2018), no. 2, pp. 433--468. 

\bibitem{BB} 
C. Bertolin, S. Brochard.
\newblock Morphisms of 1-motives defined by line bundles.
\newblock Int. Math. Res. Not. IMRN 2019 (2019), no.5, pp. 1568--1600.

\bibitem{BF} 
C. Bertolin, F. Galluzzi.
\newblock A note on divisorial correspondences of  extensions of abelian schemes by tori   
\newblock Comm. Algebra 48 (2020), no.7, pp. 3031--3034.

\bibitem{B20} 
C. Bertolin.
\newblock Third kind elliptic integrals and 1-motives (with an appendix by M. Waldschmidt and a letter of Y. André)  
\newblock J. Pure Appl. Algebra 224 (2020), no.10, 106396, 28 pp. 


\bibitem{Breen83}
L. Breen.
\newblock Fonctions th\^eta et th\'eor\`eme du cube. 
\newblock Lecture Notes in Math., Vol. 980, Springer-Verlag, Berlin, 1983. 


\bibitem{Breen90}
L. Breen.
\newblock Bitorseurs et Cohomologie Non Ab\'elienne.
\newblock The Grothendieck Festschrift, Vol. I, 401--476, Progr. Math., 86, Birkh\"auser Boston, Boston, MA, 1990.

\bibitem{Breen92} 
L. Breen.
\newblock Th\'eorie de Schreier sup\'erieure.
\newblock Ann. Sci. \'Ecole Norm. Sup. (4) 25 (1992), no. 5, pp. 465--514.


\bibitem{Breen94tan}
L. Breen.
\newblock Tannakian categories.
\newblock Motives (Seattle, WA, 1991), 337--376, 
Proc. Sympos. Pure Math., 55, Part 1, Amer. Math. Soc., Providence, RI, 1994. 

\bibitem{Breen94}
L. Breen.
\newblock On the classification of 2-gerbes and 2-stacks.
\newblock Ast\'erisque No. 225, 1994. 

\bibitem{Breen10}
L. Breen.
\newblock Notes on 1- and 2-Gerbes. 
\newblock In "Towards higher categories", Baez, John C. (ed.) et al.,  The IMA Volumes in Mathematics and its Applications 152, 2010, pp. 193--235.


\bibitem{SGA4D} 
P. Deligne.
\newblock La formule de dualit\'e globale.
\newblock Th\'eorie des topos et cohomologie \'etale des sch\'emas, Tome 3. S\'eminaire de G\'eom\'etrie Alg\'ebrique du Bois-Marie 1963--1964 (SGA 4). Lecture Notes in Math, Vol. 305. Springer-Verlag, Berlin-New York, 1973.


\bibitem{deJong}
A. de Jong.
\newblock A result of Gabber. 
\newblock Preprint, http://www.math.columbia.edu/dejong/


\bibitem{EHKV01}
D. Edidin, B. Hassett, A. Kresch, A. Vistoli.
\newblock Brauer groups and quotient stacks.
\newblock Amer. J.Math. 123 (2001), pp. 761--777.


\bibitem{Giraud}
J. Giraud.
\newblock Cohomologie non ab\'elienne.
\newblock Die Grundlehren der mathematischen Wissenschaften, Band 179, Springer-Verlag, Berlin-New York, 1971.


\bibitem{Tohoku}
A. Grothendieck.
\newblock Sur quelques points d'alg\`ebre homologique. 
\newblock T\^ohoku Math. J. (2) 9 (1957), pp. 119--221. 


\bibitem{SGA1}
A. Grothendieck.
\newblock Rev\^etements \'etales et groupe fondamental, S\'eminaire de G\'eom\'etrie Alg\'ebrique du Bois-Marie 1960-1961 (SGA 1). Lecture Notes in Math, Vol. 224. Springer-Verlag, Berlin-New York, 1971. 


\bibitem{SGA4G}
A. Grothendieck.
\newblock Th\'eorie des topos et cohomologie \'etale des sch\'emas, Tome 1. S\'eminaire de G\'eom\'etrie Alg\'ebrique du Bois-Marie 1963-1964 (SGA 4). Lecture Notes in Math, Vol. 305. Springer-Verlag, Berlin-New York, 1973. 


\bibitem{EGAIV}
 A. Grothendieck.
\newblock \'Etude locale des sch\'emas et des morphismes de sch\'emas. IV
\newblock 5-361, Inst. Hautes \'Etudes Sci. Publ. Math. No. 32, 1967.


\bibitem{Grothendieck68}
A. Grothendieck.
\newblock Dix expos\'es sur la cohomologie des sch\'emas.
\newblock  Advanced Studies in Pure Mathematics, 3. North-Holland Publishing Co., Amsterdam; Masson \& Cie, Editeur, Paris, 1968. 



\bibitem{Hakim}
M. Hakim.
\newblock Topos annel\'es et sch\'emas relatifs.
\newblock Ergebnisse der Mathematik und ihrer Grenzgebiete, Band 64. Springer-Verlag, Berlin-New York, 1972.


\bibitem{Hoobler72}
R. Hoobler.
\newblock Brauer groups of abelian schemes. 
\newblock Ann. Sci. \'Ecole Norm. Sup. (4) 5, (1972), pp. 45--70.



\bibitem{J14}
J. Jahnel.
\newblock Brauer groups, Tamagawa measures, and rational points on algebraic varieties.
\newblock Mathematical Surveys and Monographs, 198. American Mathematical Society, Providence, RI, 2014.



\bibitem{KO74}
M-A. Knus and M. Ojanguren.
\newblock Th\'eorie de la descente et alg\`ebres d'Azumaya.
\newblock Lecture Notes in Math, Vol. 389. Springer-Verlag, Berlin-New York, 1974.


\bibitem{LaumonMoretB}
G. Laumon and L. Moret-Bailly.
\newblock Champs alg\'ebriques.
\newblock  Ergebnisse der Mathematik und ihrer Grenzgebiete. 3. Folge. A Series of Modern Surveys in Mathematics, 39. Springer-Verlag, Berlin, 2000.


\bibitem{Lieblich08}
M. Lieblich.
\newblock Twisted sheaves and the period-index problem.
\newblock Compos. Math. 144 (2008), no. 1, pp. 1--31.



\bibitem{Milne80} 
J. Milne. 
\newblock \'Etale cohomology.
\newblock Princeton Mathematical Series, 33. Princeton University Press, Princeton, N.J., 1980


\bibitem{MoretBailly}
 L. Moret-Bailly.
\newblock Pinceaux de vari\'et\'es ab\'eliennes. 
\newblock Ast\'erisque, 129, 1985.


\bibitem{Mu}
D. Mumford.
\newblock Abelian varieties. 
\newblock  Tata Institute of Fundamental Research Studies in Mathematics, No.
5, Bombay, Oxford University Press, 1970.


\bibitem{Olsson16} 
M. Olsson. 
\newblock   Algebraic spaces and stacks.
\newblock American Mathematical Society Colloquium Publications, 62. American Mathematical Society, Providence, RI, 2016.




\bibitem{Tatar}
A. E. Tatar.
\newblock Length 3 complexes of abelian sheaves and {P}icard 2-stacks.
\newblock  Adv. Math. 226 (2011), no. 1, pp. 62--110. 


 
\end{thebibliography}

\end{document}